\theoremstyle{plain}
\newtheorem{example}{Example}
\newtheorem{lemma}{Lemma}
\newtheorem{theorem}{Theorem}
\newtheorem{corollary}{Corollary}
\newtheorem{definition}{Definition}
\newtheorem{question}{Question}
\newtheorem{proposition}{Proposition}
\newcommand{\mbf}{\mathbf} 
\newcommand{\supp}{\text{supp}} 
\newcommand{\wt}{\text{wt}} 
\newcommand{\Aut}{\text{Aut}}
\newcommand{\wtO}{\widetilde\Omega_{2k}}
\newcommand\1{\bigskip\noindent}
\newcommand\Dist{{\rm Dist}}
\newcommand\Det{{\rm Det}}
\newcommand\bv{{\mbf v}}
\newcommand\bu{{\mbf u}}
\newcommand\one{{\mbf 1}}
\author{Debra Boutin\\
\small Department Mathematics and Statistics\\[-0.8ex]
\small Hamilton College\\[-0.8ex] 
\small Clinton, NY, U.S.A.\\
\small\tt dboutin@hamilton.edu\\
\and
Sally Cockburn \\
\small Department Mathematics and Statistics\\[-0.8ex]
\small Hamilton College\\[-0.8ex] 
\small Clinton, NY, U.S.A.\\
\small\tt scockbur.hamilton.edu}
\begin{document}

\title{Distinguishing Orthogonality Graphs}
\author{Debra Boutin and Sally Cockburn}
\maketitle    

\begin{abstract}    A graph $G$ is said to be {\it $d$-distinguishable} if there is a labeling of the vertices with $d$ labels so that only the trivial automorphism preserves the labels.  The smallest such $d$ is the {\it distinguishing number}, $\Dist(G)$. A set of vertices $S \subseteq V(G)$ is a {\it determining set} for $G$ if every automorphism of $G$ is uniquely determined by its action on $S$.  The size of a smallest determining set for $G$ is called the {\it determining number}, $\Det(G)$. The orthogonality graph $\Omega_{2k}$ has vertices which are bitstrings of length $2k$ with an edge between two vertices if they differ in precisely $k$ bits. This paper shows that $\Det(\Omega_{2k}) = 2^{2k-1}$ and that, if $\binom{m}{2} \geq 2k$ when $k$ is odd or $\binom{m}{2}\geq 2k+1$ when $k$ is even, then $2< \Dist(\Omega_{2k}) \leq m$.
\end{abstract}


\section{Introduction}

A labeling of the vertices of a graph $G$ with the integers $1,\ldots, d$ is called a {\it $d$-distinguishing labeling} if no nontrivial automorphism of $G$ preserves the labels. A graph is called {\it $d$-distinguishable} if it has a $d$-distinguishing labeling.  The {\it distinguishing number} of $G$, ${\rm Dist}(G)$, is the fewest number of labels necessary for a distinguishing labeling.  Albertson and Collins  introduced graph distinguishing in \cite{AC}.  Over the last few decades, this topic has generated significant interest and abundant results.  See, for instance, \cite{AlSo2019a, JaMuBe2019, Ji2018, MeSo2019b} for recent works.

Most of the work in the last few decades has been in studying large families of  graphs and showing that all but a finite number in each family have distinguishing number 2.   Examples of this for finite graphs include: hypercubes $Q_n$ with $n\geq 4$ \cite{BC}, Cartesian powers  $G^n$ for a connected graph $G\ne K_2,K_3$ and $n\geq 2$ \cite{A, IK2,KZ}, Kneser graphs $K_{n:k}$ with  $n\geq 6, k\geq 2$ \cite{AB2}, and (with seven small exceptions) $3$-connected planar graphs \cite{FNT}.  Examples for  infinite graphs include:  the  denumerable random graph \cite{IKT}, the infinite hypercube \cite{IKT}, locally finite trees with no vertex of degree 1 \cite{WZ}, and denumerable vertex-transitive graphs of connectivity 1 \cite{STW}. 

Exhaustion shows that the cycles $C_3, C_4, C_5$ and the hypercubes $Q_2, Q_3$ each have distinguishing number 3.  Some  infinite graph families that are not 2-distinguishable are $K_n$ (${\rm Dist}(K_n)=n$) and the complete bipartite graph $K_{m,n}$ (${\rm Dist}(K_{m,n}) = \max\{m,n\}$ for $m\ne n$, and ${\rm Dist}(K_{n,n}) = n+1$). We will see in Section \ref{sec:lowerbd} that orthogonality graphs $\Omega_{2k}$ are also not 2-distinguishable.

A useful tool used in finding the distinguishing number is the {\it determining set} \cite{B1}, a set of vertices whose pointwise stabilizer is trivial. The {\it determining number} of a graph $G$, $\Det(G)$, is the size of a smallest determining set. For some families we only have bounds on the determining number.  For instance, for the Kneser graph, $\log_2 (n+1)\leq \Det(K_{n:k}) \leq n-k$ with both upper and lower bounds sharp \cite{B1}. However, there are families for which we know the determining number exactly.  For instance in Cartesian powers, $\Det(Q_n) = \lceil \log_2 n \rceil +1$, and $\Det(K_3^n) = \lceil \log_3(2n+1) \rceil +1$ \cite{B4}. 

The determining set and the distinguishing number were introduced at different times, by different authors, and for distinct purposes. However, Albertson and Boutin connected them in \cite{AB2}, by noting that if $G$ has a determining set of size $d$, then there is a $(d+1)$-distinguishing labeling for $G$. Thus $\Dist(G) \leq \Det(G) +1$.  We will find this relationship useful in pursuing the distinguishing number of orthogonality graphs.

For a positive integer $k$, the orthogonality graph $\Omega_{2k}$ has as vertices all bitstrings of length $2k$, with two vertices adjacent if their Hamming distance is $k$. So $\Omega_{2k}$ has the same vertices as the $2k$-dimensional hypercube, but with two vertices adjacent if their bitstrings are orthogonal when considered as vectors of ${\mathbb Z}_2^{2k}$. The graph $\Omega_{2^r}$ is used in quantum information theory to study the cost of simulating a specific quantum entanglement on $r$ qubits. See \cite{BrClTa1999,BuClWi1999,NiCh2000} for an introduction to quantum information theory and for the specific situation that properly coloring $\Omega_{2^r}$ addresses. Inspired by this quantum situation, Godsil and Newman studied the independence and chromatic numbers of $\Omega_{2^r}$ in \cite{GoNe2008}.  Earlier, Ito \cite{Ito1985} studied the even component of $\Omega_{4k}$, calling it the Hadamard graph of size $4k$,
and investigated its maximal complete subgraphs, its spectrum, and bounds on its chromatic number. Frankl \cite{Fr1986} found the independence number of such graphs and used it to improve bounds on the chromatic number in the case when $k$ is an odd prime power.  In this paper we study the determining and distinguishing numbers of the more general $\Omega_{2k}$.

The paper is organized as follows.  Definitions and facts about determining sets, distinguishing labelings, and orthogonality graphs are given in Section \ref{sect:basics}.  Section \ref{sec:lowerbd}  examines pairs of twin vertices in $\Omega_{2k}$, proves $\Det(\Omega_{2k})=2^{2k-1}$, and shows that $\Omega_{2k}$ is not 2-distinguishable. Section \ref{sec:structure} discusses odd and even vertices in $\Omega_n$, and introduces a quotient graph $\wtO$.  Section \ref{sec:DetQuotient} shows that $\Det(\wtO) \leq 2k-1$.  Finally, Section \ref{sec:upperbd}  provides the upper bound for  $\Dist(\Omega_{2k})$, and Section \ref{sect:questions}  provides some open problems for future work.

\section{Background}\label{sect:basics}

\subsection{Determining Sets and Distinguishing Labelings}

Let $G$ be a graph. A subset $S\subseteq V(G)$ is said to be a {\it determining set} for $G$ if whenever $\varphi, \psi \in {\rm Aut}(G)$ so that $\varphi(x)=\psi(x)$ for all $x\in S$, then $\varphi=\psi$.   Thus every automorphism of $G$ is uniquely determined by its action on the vertices of a determining set.  A determining set is an example of a {\it base of a permutation group action}.  Every graph has a determining set since a set containing all but one vertex of the graph is determining.  The {\it determining number} of  $G$, $\Det(G)$, is the minimum size of a determining set for  $G$.

Recall that the {\it set stabilizer} of $S\subseteq V(G)$ is the set of all  $\varphi\in{\rm Aut}(G)$ for which $\varphi(x) \in S$ for all $x\in S$.  In this case we say that $S$ is invariant under $\varphi$ and we write $\varphi(S)=S$.  The {\it pointwise stabilizer} of $S$ is the set of all $\varphi\in {\rm Aut}(G)$ for which $\varphi(x)=x$ for all $x\in S$. It is easy to see that $S\subseteq V(G)$ is a determining set for $G$ if and only if the pointwise stabilizer of $S$ is trivial.

A labeling $f:V(G) \to \{1,\ldots, d\}$ is said to be {\it $d$-distinguishing} if only the trivial automorphism preserves the label classes.  Every graph has a distinguishing labeling since each vertex can be assigned a distinct label.   A graph is called {\it $d$-distinguishable} if it has a $d$-distinguishing labeling.  The distinguishing number of $G$, ${\rm Dist}(G)$, is the fewest number of labels necessary for a distinguishing labeling. 

\begin{lemma}\label{lem:EquivDist} \rm 
Let $G$ be a graph, $\alpha \in {\rm Aut}(G)$, and $f:V(G) \to \{1, \dots, d\}$ a vertex labeling.  Then $f$ is $d$-distinguishing  if and only if $f \circ \alpha$ is  $d$-distinguishing.
\end{lemma}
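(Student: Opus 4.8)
The plan is to unwind the definition of a distinguishing labeling in terms of automorphisms that fix every label class setwise, and then observe that the two relevant subgroups of $\text{Aut}(G)$ are conjugate via $\alpha$. Concretely, say that $\beta \in \text{Aut}(G)$ \emph{preserves} a labeling $g : V(G) \to \{1,\dots,d\}$ if $g(\beta(v)) = g(v)$ for every $v \in V(G)$, i.e. $g \circ \beta = g$ as functions; thus $g$ is $d$-distinguishing precisely when the identity is the only such $\beta$. So it suffices to show that $\beta$ preserves $f \circ \alpha$ if and only if $\alpha \beta \alpha^{-1}$ preserves $f$. Granting that, the set of automorphisms preserving $f\circ\alpha$ is exactly $\alpha^{-1} H \alpha$, where $H$ is the set of automorphisms preserving $f$, and a conjugate of a subgroup is trivial if and only if the subgroup itself is trivial.

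For the key computation: if $\beta$ preserves $f \circ \alpha$, then $f \circ \alpha \circ \beta = f \circ \alpha$; composing on the right with $\alpha^{-1}$ gives $f \circ (\alpha \beta \alpha^{-1}) = f$, so $\alpha\beta\alpha^{-1}$ preserves $f$. The reverse implication is the same identity read backwards. This already settles both directions of the ``if and only if'': if $f$ is $d$-distinguishing and $\beta$ preserves $f\circ\alpha$, then $\alpha\beta\alpha^{-1}$ preserves $f$, hence $\alpha\beta\alpha^{-1} = \text{id}$, hence $\beta = \text{id}$; conversely, if $f\circ\alpha$ is $d$-distinguishing, apply the same argument with $\alpha^{-1}$ in place of $\alpha$ and $f\circ\alpha$ in place of $f$, using that $(f\circ\alpha)\circ\alpha^{-1} = f$, to conclude $f$ is $d$-distinguishing.

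There is essentially no obstacle here; the only thing to be careful about is the bookkeeping of left versus right composition, and interpreting ``preserves the label classes'' consistently as $g\circ\beta = g$ (equivalently $g = g\circ\beta^{-1}$). One could alternatively phrase the proof as: $\alpha$ is a graph automorphism carrying label-preserving automorphisms of $f\circ\alpha$ bijectively to label-preserving automorphisms of $f$; I would still record the one-line conjugation identity $f\circ(\alpha\beta\alpha^{-1}) = (f\circ\alpha)\circ\beta$ composed with $\alpha^{-1}$ as the cleanest justification, and I expect the entire write-up to take only a few lines.
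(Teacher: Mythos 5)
Your proof is correct and follows the same route as the paper: the paper's entire argument is the observation that $\varphi$ preserves the label classes of $f\circ\alpha$ if and only if $\alpha\circ\varphi\circ\alpha^{-1}$ preserves those of $f$, which is exactly your conjugation identity $f\circ(\alpha\beta\alpha^{-1}) = (f\circ\alpha)\circ\beta\circ\alpha^{-1}$. You have simply written out the verification that the paper calls straightforward.
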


\begin{proof} It is straightforward to verify that  $\varphi \in \Aut(G)$ preserves the label classes of  $f \circ \alpha$ if and only if  $\alpha \circ \varphi \circ \alpha^{-1}$ preserves the label classes of $f$.\end{proof}

The following ties together determining sets and distinguishing labelings and facilitates the work in this paper.

\begin{theorem} \label{thm:distdet} \rm \cite{AB2} $G$ is $d$-distinguishable if and only if it has a determining set $S$ of size $d{-}1$ that can be labeled in such a way that any automorphism of ${\rm Aut}(G)$ that preserves the labeling classes of $S$ fixes $S$ pointwise. \end{theorem}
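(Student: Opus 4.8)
The plan is to prove the biconditional in two directions, both built around the device of reserving one label as a ``background'' color. For the ``if'' direction, suppose $G$ has a determining set $S$ carrying a labeling $\ell$ that uses only the labels $1,\dots,d-1$ and has the stated property that every $\varphi \in \Aut(G)$ preserving the $\ell$-classes of $S$ fixes $S$ pointwise. (Note that ``preserving the $\ell$-classes of $S$'' already forces $\varphi(S)=S$, since each $\ell$-class lies inside $S$.) I would extend $\ell$ to $f : V(G) \to \{1,\dots,d\}$ by setting $f(v)=d$ for every $v \notin S$, and then check that $f$ is $d$-distinguishing: if $\varphi \in \Aut(G)$ preserves the label classes of $f$, then it fixes $f^{-1}(d) = V(G)\setminus S$ setwise, hence $\varphi(S)=S$, and it fixes each $\ell^{-1}(i) = f^{-1}(i)$ setwise; so by hypothesis $\varphi$ fixes $S$ pointwise, and since $S$ is a determining set, $\varphi = \mathrm{id}$. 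Thus $\Dist(G) \le d$.

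For the ``only if'' direction, I would reverse the construction. Let $f : V(G) \to \{1,\dots,d\}$ be a $d$-distinguishing labeling with classes $C_1, \dots, C_d$; discarding any unused labels, I may assume each $C_i \ne \emptyset$ (using fewer labels only helps, and any dropped label can be reinstated afterward by recoloring a single vertex of the set $S$ below, which only refines $\ell$). Put $S := V(G) \setminus C_d = C_1 \cup \dots \cup C_{d-1}$ and let $\ell := f|_S$, a labeling of $S$ by the $d-1$ labels $1,\dots,d-1$. I would then verify two things. First, $S$ is a determining set: any $\varphi \in \Aut(G)$ fixing $S$ pointwise fixes each $C_i$ with $i < d$ pointwise, hence setwise, and therefore fixes $C_d$ setwise, so $\varphi$ preserves every class of $f$ and is trivial. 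Second, $(S,\ell)$ has the required rigidity: if $\varphi$ preserves the $\ell$-classes of $S$, it fixes each $C_i$ ($i<d$) setwise, hence $C_d$ setwise, so $\varphi$ preserves all classes of $f$, whence $\varphi = \mathrm{id}$ --- which in particular fixes $S$ pointwise.

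I do not expect a deep obstacle. The step deserving the most care is the verification, in the ``only if'' direction, that the set $S$ I build is genuinely a determining set and not merely that $f$ is distinguishing: this is exactly the place where one must upgrade ``$\varphi$ fixes $C_1,\dots,C_{d-1}$ pointwise'' to ``$\varphi$ fixes $C_d$ setwise.'' The remaining items are routine: getting the reading of ``preserves the labeling classes of $S$'' right (it must entail $\varphi(S)=S$, which is precisely why in the ``if'' direction the label $d$ is put on \emph{all} of $V(G)\setminus S$, so that $f^{-1}(d)$ is exactly the complement of $S$), and the one-line checks for $d=1$ and for non-surjective $f$. The essential content is just the dictionary: a $d$-distinguishing labeling is the same data as a determining set equipped with a rigid labeling by $d-1$ labels, one label being held in reserve for all other vertices.
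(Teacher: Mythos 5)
Your argument is sound, and it is worth noting up front that the paper gives no proof of this theorem at all --- it is quoted from \cite{AB2} --- so the only internal point of comparison is the proof of Corollary~\ref{cor:distdet}, whose device of reserving one background label for $V(G)\setminus S$ is exactly your ``if'' direction (the corollary is the special case in which every vertex of $S$ receives its own label). Both directions of your argument are handled correctly: the extension by a background color, the observation that preserving the classes of $f$ forces $\varphi(S)=S$ because $f^{-1}(d)=V(G)\setminus S$, and the key upgrade in the converse from ``fixes $C_1,\dots,C_{d-1}$ setwise'' to ``fixes $C_d$ setwise.''

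The one thing you must confront is that what you prove in the ``only if'' direction is not the statement as printed. You produce a determining set $S=V(G)\setminus C_d$ labeled with at most $d-1$ labels, but its size is $|V(G)|-|C_d|$, in general far larger than $d-1$. This is not a repairable defect of your proof; it is a defect of the printed statement. Read literally, ``a determining set of size $d-1$'' makes the theorem false: $C_6$ is $2$-distinguishable, yet it has no determining set of size $1$, since the pointwise stabilizer of any single vertex of $C_6$ contains a reflection. The correct form of the Albertson--Boutin theorem, and the one your argument establishes, replaces ``of size $d-1$'' by ``that can be labeled with $d-1$ labels'' (equivalently, that can be $(d-1)$-distinguished as a set). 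This corrected reading is also the only one the paper ever uses: in Theorem~\ref{thm:DistOmega} the set $D$ has size $2k-1$ and is given $2k-1$ distinct labels, so size and number of labels coincide there. Your proof is the right one; just state explicitly which version of the theorem you are proving, and note why the literal version cannot hold.
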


\begin{corollary} \label{cor:distdet} \rm $\Dist(G) \leq \Det(G)+1$. \end{corollary}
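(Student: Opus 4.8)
The plan is to derive this immediately from Theorem \ref{thm:distdet} by exhibiting an explicit labeling of a smallest determining set. Write $d = \Det(G)$ and let $S = \{v_1, \dots, v_d\}$ be a determining set of minimum size (such a set exists for every graph, since a set omitting a single vertex is always determining). To invoke Theorem \ref{thm:distdet} with parameter $d+1$, I need a determining set of size $(d+1)-1 = d$ — namely $S$ itself — together with a labeling under which every automorphism that preserves the label classes of $S$ fixes $S$ pointwise.

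The labeling I would use is the obvious one: assign $v_i$ the label $i$ for $i = 1, \dots, d$; this uses $d$ of the $d+1$ available labels, with the remaining label reserved (in the distinguishing labeling furnished by Theorem \ref{thm:distdet}) for all vertices outside $S$. Since each label class within $S$ is a singleton, any $\varphi \in \Aut(G)$ permuting the label classes of $S$ must satisfy $\varphi(v_i) = v_i$ for every $i$, i.e.\ $\varphi$ fixes $S$ pointwise. Theorem \ref{thm:distdet} then gives that $G$ is $(d+1)$-distinguishable, so $\Dist(G) \le d+1 = \Det(G)+1$.

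There is essentially no obstacle; the only point worth a remark is that the degenerate case $\Det(G) = 0$ (equivalently, $\Aut(G)$ trivial) is handled uniformly, since then $\Dist(G) = 1 = \Det(G)+1$. One can also bypass Theorem \ref{thm:distdet} and argue directly: define $f \colon V(G) \to \{1,\dots,d+1\}$ by $f(v_i) = i$ and $f(x) = d+1$ for $x \notin S$; any $f$-preserving automorphism fixes each $v_i$ (it is the unique vertex carrying its label), hence fixes the determining set $S$ pointwise, hence is trivial, so $f$ is $(d+1)$-distinguishing.
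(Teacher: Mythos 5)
Your proposal is correct and matches the paper's own argument: the paper likewise labels the $d$ vertices of a smallest determining set with $d$ distinct labels, gives all remaining vertices a $(d+1)$st label, and observes that any label-preserving automorphism must fix the singleton label classes and hence be trivial. The detour through Theorem \ref{thm:distdet} is a harmless reframing; your closing "direct" version is exactly the paper's proof.
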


\begin{proof} Suppose $S$ is a smallest determining set for $G$ of size $d$.  Label each of the vertices of  $S$ with a different label. Label each of the remaining vertices of $G$ with the label $d+1$. If $\varphi\in {\rm Aut}(G)$ preserves the label classes, then $\varphi$ fixes each of the $\Det(G)$ differently labeled vertices in $S$.  Since $S$ is a determining set, this means $\varphi$ is the identity.  Thus our labeling is a $(d+1)$-distinguishing labeling for $G$. \end{proof}

As we'll see in Lemma \ref{lem:DisDetTwins} below, twin vertices play a significant role in the study of graph symmetry.  

\begin{definition} \rm Two vertices  $u, v \in V(G)$ are called {\it twins} if they have identical sets of neighbors.  That is, $u$ and $v$ are twins if $N(u)=N(v)$.\end{definition}

\begin{lemma}\label{lem:DisDetTwins}\rm Let $u,v\in V(G)$ be twins.   Then the function on $V(G)$ that interchanges $u$ and $v$ and acts as the identity on all other vertices is a graph automorphism.  Thus in any distinguishing labeling members of a twin pair  must have different labels, and further, any determining set must contain at least one member of each twin pair. \end{lemma}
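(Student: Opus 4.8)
The plan is to establish the three assertions in order, with the automorphism claim carrying essentially all the content. First I would show that the transposition $\tau\colon V(G)\to V(G)$ interchanging $u$ and $v$ and fixing every other vertex lies in $\Aut(G)$. Since $\tau$ is an involution it is automatically a bijection, so it suffices to check that $\tau$ preserves adjacency and non-adjacency, which I would do by a case analysis on $|\{x,y\}\cap\{u,v\}|$ for an arbitrary pair of vertices $x,y$. If the intersection is empty, $\tau$ fixes both $x$ and $y$ and there is nothing to prove. If it has size one, say $x=u$ and $y\notin\{u,v\}$, then $xy\in E(G)$ iff $y\in N(u)$ iff $y\in N(v)$ (using $N(u)=N(v)$) iff $vy=\tau(x)\tau(y)\in E(G)$; the case $x=v$ is symmetric. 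The only remaining pair is $\{u,v\}$ itself, which $\tau$ maps to itself. I would also note in passing that $N(u)=N(v)$ forces $u$ and $v$ to be non-adjacent (otherwise $u\in N(v)=N(u)$, impossible in a simple graph), although the case analysis above does not rely on this.

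For the labeling statement I would argue by contradiction. Suppose $f$ is a distinguishing labeling of $G$ with $f(u)=f(v)$. Then $\tau$ preserves every label class, since it moves only $u$ and $v$ and those two carry the same label. But $\tau\neq\mathrm{id}$, contradicting the assumption that no nontrivial automorphism preserves the label classes of $f$. Hence $f(u)\neq f(v)$, i.e.\ the two members of a twin pair must receive different labels.

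For the determining-set statement I would use the characterization recalled in Section~\ref{sect:basics}: $S$ is a determining set if and only if its pointwise stabilizer is trivial. If some determining set $S$ contained neither $u$ nor $v$, then $S\subseteq V(G)\setminus\{u,v\}$, on which $\tau$ acts as the identity, so $\tau$ would lie in the pointwise stabilizer of $S$. Since $\tau$ is a nontrivial automorphism, this contradicts $S$ being determining, so every determining set must contain at least one of $u,v$.

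There is no substantive obstacle here: the only points requiring care are the bookkeeping in the adjacency check — in particular not overlooking the pair $\{u,v\}$ — and being precise that $\tau$ is genuinely nontrivial (which uses $u\neq v$) when deriving the two contradictions.
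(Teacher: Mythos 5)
Your proof is correct and complete; the paper itself omits the argument, stating only that ``the proof is elementary,'' and your case analysis on $|\{x,y\}\cap\{u,v\}|$ together with the two contradiction arguments is exactly the standard elementary proof the authors have in mind. No gaps.
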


The proof is elementary.

\subsection{Orthogonality Graphs}\label{sec:definition}

\begin{definition} \rm The {\it orthogonality graph} $\Omega_{2k}$ has as its vertex set all bitstrings of length $2k$, 
\[
V(\Omega_{2k}) =\big  \{ \mbf u = u_1 u_2 \dots u_{2k} \mid u_i \in \{0, 1\} \big\} = \mathbb Z_2^{2k},
\]
with two vertices adjacent if the corresponding bitstrings differ in exactly $k$ bits. Let $\mbf 0, \mbf 1$ denote the bitstring of all 0s and all 1s respectively.  Define $\mbf u + \mbf w = (u_1+w_1)(u_2+w_2)  \dots (u_{2k}+w_{2k})$
where all bit-sums are taken modulo 2.  Note that $\Omega_{2k}$ has order $2^{2k}$ and is $\binom{2k}{k}$-regular.
\end{definition}

\begin{example} \rm The smallest orthogonality graph $\Omega_2$ occurs when $k=1$ and is isomorphic to $C_4$. 
\end{example}

\begin{example}\label{ex:O4} \rm  The orthogonality graph $\Omega_4$ is a  6-regular graph of order 16 and  consists of two isomorphic components, each of which is a copy of the circulant graph $C_8[1,2,3]$. \end{example}

\begin{definition}\rm The (Hamming) weight of $\mbf u \in V (\Omega_{2k})$, denoted $\wt(\mbf u)$,  is the number of 1s in its bitstring. Let $\mbf 0$ and $\mbf 1$ be the bitstrings of length $2k$ of weight $0$ and $2k$ respectively. The {\it support} of $\mbf u$ is the set of indices of the bits where its 1's occur.  That is, $\supp(\mbf u) = \{ i \mid u_i = 1\} \subseteq \{1, 2, \dots , 2k\}.$\end{definition}

Note that $\wt(\mbf u) =  |\supp(\mbf u)|$. Also, note that for any $\mbf u, \mbf v \in V(\Omega_{2k})$,
$\supp(\mbf u + \mbf v) = \supp(\mbf u)\,  \triangle \, \supp(\mbf v),$ where $\triangle$ denotes the symmetric difference.  In particular,  $\supp(\mbf u + \mbf 1)$ is the complement of $\supp(\mbf u)$.  Further, $\mbf u, \mbf v \in V(\Omega_{2k})$ are adjacent if and only if $\wt(\mbf u + \mbf v) =|  \supp(\mbf u)\,  \triangle \, \supp(\mbf v)| = k.$

It is easy to verify that the vertex maps described below are automorphisms of $\Omega_{2k}$. 

\1$\bullet$ {\bf Permutation automorphisms.} For any permutation  $\sigma \in S_{2k}$,  let $\sigma$ act on vertices of $\Omega_{2k}$ by permuting the order of the bits; that is,
\[
\sigma(\mbf u) = \sigma(  u_1 u_2 \dots u_{2k} ) = u_{\sigma(1)} u_{\sigma(2)} \dots u_{\sigma(2k)}.
\] 
\1$\bullet$ {\bf Translation automorphisms.} For $\mbf u \in V(\Omega_{2k})$, define $\tau_{\mbf u}:V(\Omega_{2k}) \to V(\Omega_{2k})$  by 
\[
\tau_{\mbf u} (\mbf w) = \mbf u + \mbf w.\]

For $k\geq 2$, these  two families of automorphisms do not exhaust $\Aut(\Omega_{2k})$.  For example, we will see in Section \ref{sec:lowerbd} that  there is an automorphism $\pi_{\mbf 0}$ that transposes $\mbf 0$ 
and $\one$ and leaves all other vertices fixed. The following argument shows that $\pi_{\mbf 0}$ is not in the subgroup generated by permutation automorphisms and translation automorphisms.  

Any composition of translation automorphisms is itself a translation automorphism; the same is true for permutation automorphisms. Note also that for all $\mbf u, \mbf w\in V(\Omega_{2k})$ and $\sigma\in S_{2k}$,
\[
(\sigma \circ \tau_{\mbf u})(\mbf w) = \sigma(\mbf u + \mbf w) = \sigma(\mbf u) + \sigma(\mbf w) =  (\tau_{\sigma(\mbf u)}\circ \sigma) (\mbf w).
\]
  
Thus any automorphism in the subgroup generated by permutations and translations can be written in the form $\tau_{\mbf u} \circ \sigma$.  If $\tau_{\mbf 0}$ is in this subgroup, then there exists  $\mbf u \in V(\Omega_{2k})$ and $\sigma \in S_{2k}$ so that $\pi_{\mbf 0} = \tau_{\mbf u} \circ \sigma$. Note that $\pi_{\mbf 0} (\mbf 0) =\mbf 1$ while $\tau_{\mbf u} \circ \sigma (\mbf 0) = \tau_{\mbf u}(\mbf 0) = \mbf u$. Thus $\mbf u = \mbf 1$. 

However, $\pi_{\mbf 0}$ fixes all vertices other than $\mbf 0$ and $\mbf 1$ while $\tau_{\mbf 1} \circ \sigma$ takes vertices of weight 1 to vertices of weight $2k-1$.  Since $k > 1$, this shows $\pi_{\mbf 0}$ is not in this subgroup. 

Orthogonality graphs are highly symmetric in the sense that they are arc-, edge-, and vertex-transitive.  To show this, suppose $(\mbf x, \mbf y)$ and $(\mbf u, \mbf w)$ are arcs (directed edges) of $\Omega_{2k}$.   Since $(\mbf x, \mbf y)$ and $(\mbf u, \mbf w)$ are edges, each of  $\mbf x + \mbf y$ and $\mbf u + \mbf w$ has weight $k$.  Since their supports have the same size, there is a permutation $\sigma \in S_{2k}$ taking the support of $\mbf x + \mbf y$ to the support of $\mbf u + \mbf w$.  Denote by $\sigma$ the corresponding permutation automorphism of $\Omega_{2k}$.  Then $\sigma(\mbf x+\mbf y) = \mbf u + \mbf w$.  Now consider the automorphism $\tau_\mbf{u} \circ \sigma\circ \tau_{\mbf x}$:
\[
(\tau_\mbf{u} \circ \sigma \circ \tau_{\mbf x}) (\mbf x) = \tau_{\mbf u} (\sigma(\mbf 0)) =  \tau_{\mbf u} (\mbf 0) = \mbf u ;
\]
\[
(\tau_\mbf{u} \circ \sigma\circ \tau_{\mbf x})(\mbf y) = \tau_{\mbf u} (\sigma(\mbf x + \mbf y)) =   \tau_{\mbf u} (\mbf u + \mbf w) = \mbf w.
\]

Thus the automorphism $\tau_\mbf{u} \circ \sigma\circ \tau_{\mbf x}$ maps the arc $(\mbf x, \mbf y)$ to $(\mbf u, \mbf w)$ proving that $\Omega_{2k}$ is arc-transitive.  The edge- and vertex-transitivity of $\Omega_{2k}$ follow from its arc-transitivity. 

\section{$\Det(\Omega_{2k})$ and a Lower Bound on $\Dist(\Omega_{2k})$}\label{sec:lowerbd}

To approach the determining number and distinguishing number of $\Omega_{2k}$ we will first want to study the twin vertices in $\Omega_{2k}$.

\begin{lemma} \label{lem:twins}\rm The vertices of $\Omega_{2k}$ can be partitioned into twin pairs of the form $\{\mbf u, \mbf u + \one\}$ for $\mbf u \in \Omega_{2k}$. In particular, $\mbf u$ and $\mbf w$ are twins if and only if $\mbf w = \mbf u +\mbf 1$.  Further,  there is no set of vertices of size three or more which are pairwise twins. \end{lemma}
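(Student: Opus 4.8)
The plan is to characterize twins in $\Omega_{2k}$ purely in terms of the Hamming metric. Recall that $\mbf x$ and $\mbf y$ are adjacent exactly when $\wt(\mbf x + \mbf y) = k$, so $N(\mbf u) = \{\mbf x : \wt(\mbf x + \mbf u) = k\}$. First I would show that $\mbf u$ and $\mbf u + \one$ are twins: for any $\mbf x$, we have $\wt(\mbf x + \mbf u) = k$ if and only if $\wt(\mbf x + \mbf u + \one) = \wt\big((\mbf x + \mbf u) + \one\big) = 2k - k = k$, using that adding $\one$ complements the support and hence sends weight $w$ to weight $2k - w$. Thus $N(\mbf u) = N(\mbf u + \one)$, and since $\mbf u \mapsto \mbf u + \one$ is a fixed-point-free involution on $\mathbb Z_2^{2k}$, the pairs $\{\mbf u, \mbf u + \one\}$ genuinely partition $V(\Omega_{2k})$.

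The substantive direction is the converse: if $\mbf u$ and $\mbf w$ are twins then $\mbf w = \mbf u + \one$. Equivalently, setting $\mbf z = \mbf u + \mbf w$, I must show that $N(\mbf u) = N(\mbf w)$ forces $\mbf z \in \{\mbf 0, \one\}$; the case $\mbf z = \mbf 0$ is excluded since distinct twins are intended, but including it does no harm. By applying the translation automorphism $\tau_{\mbf u}$ (which preserves the twin relation by Lemma~\ref{lem:DisDetTwins}'s reasoning, or simply because it is an automorphism), it suffices to treat the case $\mbf u = \mbf 0$: I must show that if $N(\mbf 0) = N(\mbf z)$ then $\mbf z \in \{\mbf 0, \one\}$. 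Now $N(\mbf 0)$ is the set of all weight-$k$ vectors, and $N(\mbf z)$ is the set of all $\mbf x$ with $\wt(\mbf x + \mbf z) = k$. So I need: if every weight-$k$ vector $\mbf x$ satisfies $\wt(\mbf x + \mbf z) = k$, then $\wt(\mbf z) \in \{0, 2k\}$.

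For this last step, I would argue by contraposition: suppose $0 < \wt(\mbf z) < 2k$, and produce a weight-$k$ vector $\mbf x$ with $\wt(\mbf x + \mbf z) \neq k$. Write $a = |\supp(\mbf x) \cap \supp(\mbf z)|$; then $\wt(\mbf x + \mbf z) = \wt(\mbf x) + \wt(\mbf z) - 2a = k + \wt(\mbf z) - 2a$, so I need a weight-$k$ set $\supp(\mbf x)$ whose intersection with $\supp(\mbf z)$ has size $a \neq \wt(\mbf z)/2$. Since $\wt(\mbf z)$ ranges over $1, \dots, 2k-1$, there is always enough room to choose such an overlap (e.g. take $a = 0$ if $|\supp(\mbf z)| \le k$, which is allowed since $|\supp(\mbf z)| \le 2k-1$ leaves at least one index outside... actually one must be slightly careful and pick $a$ as large or small as the sizes permit while avoiding $\wt(\mbf z)/2$; both $a=0$ and $a = \min(k, \wt(\mbf z))$ are realizable, and these coincide with $\wt(\mbf z)/2$ only in degenerate situations one checks directly). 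This is the only place requiring care, but it is a short counting argument, not the main obstacle.

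Finally, the assertion that no three vertices are pairwise twins is immediate from the partition into twin pairs: if $\mbf u, \mbf v, \mbf w$ were pairwise twins, then $\mbf v = \mbf u + \one$ and $\mbf w = \mbf u + \one$, forcing $\mbf v = \mbf w$. I expect the overlap-counting in the converse direction to be the only step needing genuine verification; everything else is formal manipulation of supports and weights.
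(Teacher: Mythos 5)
Your proposal is correct and follows essentially the same route as the paper: both arguments reduce the converse direction to choosing a weight-$k$ vector $\mbf x$ whose support meets $\supp(\mbf u + \mbf w)$ in a controlled number $a$ of positions so that $\wt(\mbf x + \mbf u + \mbf w) = k + \ell - 2a \neq k$, and the paper takes exactly $a = \min(k,\ell)$, which yields $|k-\ell| \neq k$ for all $0 < \ell < 2k$ and settles the one spot you flagged as needing care (note that your other candidate $a = 0$ is in fact not realizable when $\ell > k$, so $\min(k,\ell)$ is the choice to use). Your normalization to $\mbf u = \mbf 0$ via $\tau_{\mbf u}$ is a cosmetic difference, and the forward direction and the no-triple-of-twins observation match the paper's proof.
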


\begin{proof} Note that the Hamming distance between $\bu$ and $\bu+\one$ is $2k$, so $\bu$ and $\bu + \one$ are nonadjacent.  Suppose $\bv \in N(\bu)$.  Then ${\rm wt}(\bv + \bu) = |{\rm supp}(\bv + \bu)| = k$, and  ${\rm wt}((\bv+\bu)+\one)=|{\rm supp}((\bv+\bu)+\one)|=2k-k=k$.  Thus $\bv \in N(\bu+\one)$.  Similarly, $\bv \in N(\bu + \one) $ means that $\bv \in N(\bu)$. Thus $N(\bu)=N(\bu+\one)$ so $\bu$ and $\bu+\one$ are twins.  
 
Suppose that $\mbf w \ne \bu+\one$ and that $\mbf w$ is not adjacent to $\mbf u$. We will show that there is some $\mbf y \in N(\mbf w)$ so that $\mbf y \not \in N(\bu)$.  
 
Let ${\rm wt}(\mbf w+\bu) = \ell$. Since $\mbf w$ is not adjacent to $\mbf u$, $\ell \ne k$. Let $r$ be the smaller of $\ell$ and $k$.  Choose $\mbf x \in V(\Omega_{2k})$ of weight $k$ so that its support overlaps with $r$ positions in the support of  $\mbf w + \mbf u$.  Let $\mbf y= \mbf w + \mbf x$. Since ${\rm wt}(\mbf x)=k$, $\mbf y \in N(\mbf w)$.  By our choice of support for $\mbf x$, ${\rm wt}(\mbf y + \mbf u)  = {\rm wt}((\mbf w + \mbf x) + \bu)=  {\rm wt}((\mbf w + \mbf u) + \mbf x) = |k-\ell|$.  

Further,  since $\ell \not\in \{0,2k\}$, we get that ${\rm wt}((\mbf w + \mbf u)+\mbf x) \ne k$.  Thus $\mbf y\not\in N(\mbf u)$.  Thus $\mbf w$ and $\mbf u$ are not twins.  In particular, each vertex $\mbf u$ in $\Omega_{2k}$ has a unique twin, $\mbf u + \one$.  	

Thus the vertices of $\Omega_{2k}$ can be partitioned into  twin pairs, and these pairs have the form $\{\mbf u, \mbf u + \one\}$.\end{proof}

Together Lemma \ref{lem:DisDetTwins} and Lemma \ref{lem:twins} prove the following.

\begin{theorem}\label{cor:DetOmega} \rm A subset of $V(\Omega_{2k})$ is a  determining set for $\Omega_{2k}$ if and only if it contains at least one vertex from each twin pair.  Thus $\Det(\Omega_{2k}) = 2^{2k-1}$.
\end{theorem}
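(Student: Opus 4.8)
The plan is to read off both directions of the equivalence from the two preceding lemmas, and then count. The ``only if'' direction is immediate from Lemma~\ref{lem:DisDetTwins}: any determining set must contain at least one member of each twin pair, and by Lemma~\ref{lem:twins} the twin pairs are exactly the sets $\{\mbf u, \mbf u + \one\}$, which partition $V(\Omega_{2k})$. So the substance lies in the ``if'' direction, and in extracting the numerical value.

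For the converse, suppose $S \subseteq V(\Omega_{2k})$ meets every twin pair, and let $\varphi \in \Aut(\Omega_{2k})$ fix $S$ pointwise; I want to conclude $\varphi = \mathrm{id}$, i.e.\ that the pointwise stabilizer of $S$ is trivial. The key point is that any automorphism preserves the twin relation, since ``$\mbf u$ and $\mbf w$ are twins'' is the purely graph-theoretic condition $N(\mbf u) = N(\mbf w)$. Hence if $\varphi$ fixes a vertex $\mbf u$, then $\varphi(\mbf u + \one)$ is a twin of $\varphi(\mbf u) = \mbf u$; but by Lemma~\ref{lem:twins} the unique twin of $\mbf u$ is $\mbf u + \one$, forcing $\varphi(\mbf u + \one) = \mbf u + \one$. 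So fixing one vertex of a twin pair forces $\varphi$ to fix its partner as well. Since $S$ meets every twin pair and the twin pairs cover $V(\Omega_{2k})$, the automorphism $\varphi$ fixes every vertex, so $\varphi = \mathrm{id}$ and $S$ is a determining set.

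It remains to compute $\Det(\Omega_{2k})$. By Lemma~\ref{lem:twins}, the $2^{2k}$ vertices of $\Omega_{2k}$ are partitioned into $2^{2k-1}$ twin pairs. By the equivalence just proved, a determining set is precisely a set meeting every twin pair, so it has size at least $2^{2k-1}$; and a transversal that picks exactly one vertex from each pair is a determining set of size $2^{2k-1}$. Therefore $\Det(\Omega_{2k}) = 2^{2k-1}$.

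I do not expect a genuine obstacle here: once the twin structure of $\Omega_{2k}$ (Lemma~\ref{lem:twins}) and the twin-swap automorphism (Lemma~\ref{lem:DisDetTwins}) are in hand, the statement is essentially a corollary. The only step meriting a sentence of care is the observation that automorphisms send twins to twins and hence — via uniqueness of twins — respect the twin-pair partition; everything else is counting.
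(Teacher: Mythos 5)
Your proof is correct and follows exactly the route the paper intends: the paper simply states that Lemma~\ref{lem:DisDetTwins} and Lemma~\ref{lem:twins} together prove the theorem, and your write-up supplies precisely those details (the twin-swap automorphism for the ``only if'' direction, and the fact that an automorphism fixing one member of a twin pair must fix its unique twin for the ``if'' direction --- the latter observation is essentially Lemma~\ref{lem:AutoPolar}, which the paper records immediately afterward). No gaps.
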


The following lemma helps us understand how automorphisms of $\Omega_{2k}$ interact with twin pairs.

\begin{lemma}\label{lem:AutoPolar} \rm 
Any automorphism $\alpha \in {\rm Aut(}\Omega_{2k})$ preserves twin pairs. That is, for all $\mbf u \in V(\Omega_{2k})$,
$
\alpha(\mbf u + \mbf 1) = \alpha(\mbf u) + \mbf 1.
$
\end{lemma}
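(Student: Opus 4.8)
The essential work has already been done in Lemma~\ref{lem:twins}, which tells us two things: first, that being twins is exactly the relation $\mbf w = \mbf u + \mbf 1$, and second, that each vertex has a \emph{unique} twin. Since the property of being twins is defined purely in terms of adjacency (equality of neighborhoods), it is preserved by any automorphism. So the plan is: (i) observe that $\alpha$ carries twin pairs to twin pairs; (ii) apply Lemma~\ref{lem:twins} to identify the twin of $\alpha(\mbf u)$ two different ways and conclude they agree.

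\textbf{Step (i).} Let $\alpha \in \Aut(\Omega_{2k})$ and suppose $\mbf x, \mbf y \in V(\Omega_{2k})$ are twins, i.e.\ $N(\mbf x) = N(\mbf y)$. Because $\alpha$ is an automorphism, $N(\alpha(\mbf x)) = \alpha(N(\mbf x)) = \alpha(N(\mbf y)) = N(\alpha(\mbf y))$, so $\alpha(\mbf x)$ and $\alpha(\mbf y)$ are twins as well.

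\textbf{Step (ii).} Fix $\mbf u \in V(\Omega_{2k})$. By Lemma~\ref{lem:twins}, $\mbf u$ and $\mbf u + \mbf 1$ are twins, so by Step~(i) the vertices $\alpha(\mbf u)$ and $\alpha(\mbf u + \mbf 1)$ are twins. But Lemma~\ref{lem:twins} also says that $\alpha(\mbf u)$ has a unique twin, namely $\alpha(\mbf u) + \mbf 1$. Hence $\alpha(\mbf u + \mbf 1) = \alpha(\mbf u) + \mbf 1$, which is exactly the claim.

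\textbf{Main obstacle.} There is essentially none left at this stage: the content is entirely contained in Lemma~\ref{lem:twins} (in particular the uniqueness of twins, proved there via the explicit neighbor-finding argument). The only point that must not be skipped is that twinhood is an adjacency-invariant relation, so that automorphisms respect it; once that is noted, the uniqueness of twins forces the stated identity.
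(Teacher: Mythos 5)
Your proof is correct and follows essentially the same route as the paper: both arguments observe that automorphisms preserve the twin relation (equality of neighborhoods) and then invoke the uniqueness of twins from Lemma~\ref{lem:twins} to pin down $\alpha(\mbf u + \mbf 1)$ as $\alpha(\mbf u) + \mbf 1$. No gaps; your write-up just makes the two steps slightly more explicit than the paper's.
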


\begin{proof}
Since automorphisms preserve adjacency and nonadjacency, $\alpha(\mbf u)$ and $\alpha(\mbf u + \mbf 1)$ must be nonadjacent vertices with exactly the same set of neighbors. By Lemma~\ref{lem:twins}, the only other vertex with exactly the same neighbors as $\alpha(\mbf u)$ is  its twin $\alpha(\mbf u) + \mbf 1$.\end{proof}

Our knowledge of twin pairs helps us prove in the following that $\Omega_{2k}$ is not 2-distinguishable.

\begin{theorem}\label{thm:Dist2} \rm 
 {\rm Dist(}$\Omega_{2k}) > 2$.  \end{theorem}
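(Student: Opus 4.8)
The plan is to use the abundance of twin pairs in $\Omega_{2k}$ together with a counting argument. Suppose, for contradiction, that $f : V(\Omega_{2k}) \to \{1,2\}$ is a $2$-distinguishing labeling. By Lemma~\ref{lem:twins}, the $2^{2k}$ vertices of $\Omega_{2k}$ are partitioned into $2^{2k-1}$ twin pairs of the form $\{\mbf u, \mbf u + \one\}$, and by Lemma~\ref{lem:DisDetTwins} the two members of any twin pair must receive different labels under $f$. So within each twin pair, one vertex is labeled $1$ and the other is labeled $2$. In particular, exactly $2^{2k-1}$ vertices receive each label.

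Next I would examine the label classes more carefully. Consider the set $A = f^{-1}(1)$; it contains exactly one vertex from each twin pair, so it is a transversal of the twin-pair partition, and likewise $f^{-1}(2) = \{\mbf u + \one : \mbf u \in A\}$. The key point is that the automorphism group of $\Omega_{2k}$ is large enough — in particular it contains all permutation automorphisms $\sigma$ for $\sigma \in S_{2k}$ and the translation automorphism $\tau_{\one}$ — and any automorphism preserving the label classes $\{A, f^{-1}(2)\}$ must be trivial. The idea is to produce a nontrivial automorphism that preserves $A$ setwise (hence also preserves its complement $f^{-1}(2)$ setwise). One promising candidate is $\tau_{\one}$ itself: $\tau_{\one}$ swaps $\mbf u \leftrightarrow \mbf u + \one$, so it interchanges $A$ with $f^{-1}(2)$, not a label-preserving map. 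Instead I would look for a nontrivial $\sigma \in S_{2k}$, or a composition $\tau_{\mbf u} \circ \sigma$, that stabilizes $A$.

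The cleanest route is probably a direct counting/pigeonhole argument rather than exhibiting a specific automorphism. Here is the plan: restrict attention to a small, automorphism-rich induced subset whose symmetries cannot all be broken with only two labels. For instance, the vertices $\mbf 0$ and $\one$ form a twin pair, and they are the unique pair that is fixed (as a set) by \emph{every} permutation automorphism $\sigma \in S_{2k}$ — since $\sigma(\mbf 0) = \mbf 0$ and $\sigma(\one) = \one$. More generally, for each weight $w$, the set of weight-$w$ vertices is preserved by all of $S_{2k}$, and $S_{2k}$ acts on the weight-$w$ vertices as it acts on $w$-subsets of $\{1,\dots,2k\}$. The subgroup of permutation automorphisms alone, acting on the $2k$ weight-$1$ vertices (the standard basis vectors $\mbf e_1,\dots,\mbf e_{2k}$), is the full symmetric group $S_{2k}$ on those $2k$ vertices; to distinguish them one needs at least... but here two labels may suffice on that orbit, so one must also use the interaction with higher-weight orbits and with translations. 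So I would instead argue: with a $2$-labeling, consider the restriction to weight-$1$ vertices. Any relabeling-preserving automorphism that is a permutation automorphism must fix the label pattern on $\{\mbf e_1,\dots,\mbf e_{2k}\}$; but $S_{2k}$ acts as the symmetric group there, and with two labels there are many color-preserving permutations unless almost all weight-$1$ vertices get the same label — and then symmetry among the same-labeled ones must be broken by other orbits, which by twin constraints are themselves rigidly split. Pushing this through, one finds a nontrivial permutation automorphism (possibly composed with $\tau_{\mbf u}$ for suitable $\mbf u$) that preserves $f$, contradicting that $f$ is distinguishing.

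The main obstacle I anticipate is making the last step rigorous: from "two label classes, each a twin-transversal" one must actually \emph{locate} a surviving nontrivial automorphism, and the automorphism group is not merely generated by permutations and translations (the excerpt explicitly notes the extra automorphism $\pi_{\mbf 0}$). So the cleanest argument likely does not go through the full group at all, but exploits $\pi_{\mbf 0}$ or similar "local" automorphisms directly: $\pi_{\mbf 0}$ fixes all vertices except $\mbf 0$ and $\one$, which it swaps. Since $f(\mbf 0) \ne f(\one)$ always holds (twin pair), $\pi_{\mbf 0}$ does \emph{not} preserve $f$ — so that particular automorphism is harmless. The resolution is then to find, for a generic $2$-labeling, \emph{some} twin pair $\{\mbf u, \mbf u + \one\}$ together with an automorphism fixing every vertex outside that pair and swapping $\mbf u$ with $\mbf u + \one$ — no, that too respects the twin constraint. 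So the real work is a genuine counting argument: I expect the proof to show that the number of label-class-preserving automorphisms is forced to exceed $1$ by comparing $|{\rm Aut}(\Omega_{2k})|$ against the number of ordered pairs $(A, f^{-1}(2))$ of the required form, or equivalently by a direct symmetry argument on a carefully chosen orbit. Nailing down exactly which orbit and which automorphism does the job is the step I'd budget the most care for.
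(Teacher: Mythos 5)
Your setup is right --- twin pairs force each label class to be a transversal of the twin-pair partition --- but the proposal never closes, and you say so yourself: you end by conceding that ``nailing down exactly which orbit and which automorphism does the job'' is still open. That last step is not a technicality; it is the entire content of the theorem, and the counting strategies you sketch (comparing $|{\rm Aut}(\Omega_{2k})|$ to the number of transversals, or analyzing the $S_{2k}$-action on the weight-$1$ orbit) are not developed far enough to tell whether they would work. As it stands there is a genuine gap: an arbitrary transversal $A$ gives no visible nontrivial automorphism stabilizing it, and you correctly rule out the candidates you try ($\tau_{\one}$ swaps the classes, $\pi_{\mbf 0}$ violates the twin constraint).

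The missing idea, which is how the paper finishes, is a normalization via Lemma~\ref{lem:EquivDist}. The twin-swap automorphisms $\pi_{\mbf u}$ act transitively on the set of twin-transversals: composing $f$ with $\pi_{\mbf u}$ for exactly those $\mbf u$ in the red class whose first bit is $0$ produces a new $2$-labeling $f'$ which, by Lemma~\ref{lem:EquivDist}, is distinguishing if and only if $f$ is, and whose red class is precisely $\{\mbf u : u_1 = 1\}$. That canonical class is visibly preserved by the nontrivial permutation automorphism induced by the cycle $(2\ 3\ \cdots\ 2k)$, which fixes the first bit of every vertex. Hence $f'$ is not distinguishing, so neither is $f$. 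The lesson is that Lemma~\ref{lem:EquivDist} lets you replace ``every transversal has a nontrivial setwise stabilizer'' (hard to see directly) with ``some one transversal in each ${\rm Aut}$-orbit does'' (easy), and the kernel automorphisms $\pi_{\mbf u}$ guarantee all transversals lie in a single orbit.
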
 

\begin{proof} 
Suppose there exists a distinguishing 2-labeling $f$  of $\Omega_{2k}$.  We will call the labels red and green.  Since twin vertices must get different labels, exactly half the vertices are red and exactly half the vertices are green.  For any vertex $\mbf u$, let $\pi_{\mbf u}$ be the automorphism that interchanges $\mbf u$ and its twin $\bu + \one$, and leaves all other vertices fixed.  By Lemma 2, $f \circ \pi_{\mbf u}$ is also a distinguishing 2-labeling.  For each $\mbf u$ in our red label class that does not have a 1 in its first bit, apply $\tau_{\mbf u}$.  This process leads us to a distinguishing 2-labeling, $f'$ in which the red label class is precisely the set of vertices with a 1 in their first bit.  
  
Let $\sigma$ be the cyclic permutation $\big (2\  3  \, \cdots  \, (2k) \big) \in S_{2k}$. The corresponding permutation automorphism  is nontrivial and fixes the first bit of each vertex.  Thus $\sigma$ preserves the label classes. Hence the labeling $f'$ is not distinguishing, and thus by Lemma \ref{lem:EquivDist}, $f$ is also not a distinguishing labeling.
\end{proof}

\section{Structure and $\wtO$} \label{sec:structure}

To find an upper bound on the distinguishing number, it will be useful to understand the structure of $\Omega_{2k}$. We start with work by Godsil and Newman in \cite{GoNe2008} which describes $\Omega_{2k}$ in terms of the parity of the weights of its vertices, concluding the following.

\bigskip\noindent{\bf \boldmath Structure of $\Omega_{2k}$:} \rm
Each vertex of $\Omega_{2k}$  has either even or odd Hamming weight. Thus the vertices of $\Omega_{2k}$ can be partitioned into the set of even vertices and the set of odd vertices. 

\begin{enumerate}
\item If $k$ is even, then $\Omega_{2k}$ consists of two isomorphic connected components, namely the subgraph induced by the even vertices and  the subgraph induced by the odd vertices.  We will refer to these as the even component and the odd component respectively.

\item If $k$ is odd, then $\Omega_{2k}$ is connected and bipartite, with parts the set of even vertices and the set of odd vertices.
\end{enumerate}

To more fully understand the structure of a graph, it can be useful to study a quotient graph.  Given any equivalence relation $\sim$ on a vertex set, we define a corresponding quotient graph $G  \slash \hspace{-.05in}\sim$ whose vertices are the equivalence classes of vertices under $\sim$, with classes $[u]$ and $[w]$ being adjacent if there exist $u', w' \in V(G)$ with $u \sim u'$, $w \sim w'$ and $u'w' \in E(G)$. The quotient graph is smaller, possibly simpler, and yet preserves some structure of the original graph.

In $V(\Omega_{2k})$ we identify each vertex with its twin.  That is, we define $\mbf u \sim \mbf u + \one$.  It is easy to verify that this is an equivalence relation. We denote the resulting quotient graph by $\wtO$ and its vertices by $[\mbf u] = \{\mbf u , \mbf u+\mbf 1\}=[\mbf u + \mbf 1]$. Note that $\wtO$ has order $2^{2k-1}$ and is $\frac{1}{2}\binom{2k}{k}$-regular. 

\begin{example}\rm The quotient graph $\widetilde \Omega_2$ is  $K_2$.
\end{example}

\begin{example} \rm The quotient graph $\widetilde \Omega_4$ is a 3-regular graph of order 8, and consists of two isomorphic components. By degree considerations alone, $\widetilde \Omega_4$ is the disjoint union of two copies of $K_4$. \end{example}

Since for any $\mbf u \in V(\Omega_{2k})$, $\wt(\mbf u + \one) = 2k - \wt(\mbf u)$, we see that $\mbf u$ is an even vertex if and only if $\mbf u + \one$ is even.  Hence the vertices of the quotient graph can also be partitioned into even and odd vertices. Moreover, if $k$ is even, $\wtO$ also consists of an even and an odd component, and if $k$ is odd, $\wtO$ is also bipartite with an even and an odd part.

In the proof of the proposition that follows, we preview methods that will be used in Theorem \ref{thm:DistOmega} to find an upper bound on ${\rm Dist}(\Omega_{2k})$ using ${\rm Det}(\widetilde \Omega_{2k})$. 

\begin{proposition}\rm ${\rm Dist}(\widetilde\Omega_4) = 5$ and ${\rm Dist}(\Omega_4) = 4$.\end{proposition}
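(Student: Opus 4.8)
The plan is to compute both distinguishing numbers by exploiting the explicit structure worked out in the preceding examples: $\Omega_4$ is two disjoint copies of the circulant $C_8[1,2,3]$, its quotient $\widetilde\Omega_4$ is two disjoint copies of $K_4$, and by Theorem~\ref{cor:DetOmega}, $\Det(\Omega_4) = 2^{3} = 8$, so Corollary~\ref{cor:distdet} gives only the weak bound $\Dist(\Omega_4)\le 9$. We will need to do better by hand.

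\textbf{Computing $\Dist(\widetilde\Omega_4)$.} Since $\widetilde\Omega_4 = K_4 \sqcup K_4$, its automorphism group is $(S_4\times S_4)\rtimes \mathbb{Z}_2$, where the $\mathbb{Z}_2$ swaps the two components. First I would recall the standard fact that $\Dist(K_4)=4$: any labeling with at most $3$ labels has a repeated label on two vertices, which can be transposed. For the disjoint union, a distinguishing labeling must in particular distinguish each $K_4$, so it needs at least $4$ labels within each copy; moreover if both copies receive exactly the same multiset of labels, the component-swap is label-preserving, so we cannot use the labeling ``$\{1,2,3,4\}$ on each copy.'' A counting/pigeonhole argument then shows $4$ labels are not enough: with only labels $\{1,2,3,4\}$, each copy must use all four (else a transposition survives), forcing identical label multisets and hence the swap survives. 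So $\Dist(\widetilde\Omega_4)\ge 5$. For the upper bound I would exhibit an explicit $5$-labeling: label one $K_4$ with $1,2,3,4$ and the other with $1,2,3,5$; no automorphism can preserve labels, since the label $5$ pins down the second component and within each $K_4$ all labels are distinct. Hence $\Dist(\widetilde\Omega_4)=5$.

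\textbf{Computing $\Dist(\Omega_4)$.} Here I would first observe that $\Omega_4$ has twin pairs (the antipodal pairs $\{\mbf u,\mbf u+\one\}$), so by Lemma~\ref{lem:DisDetTwins} the two vertices of each twin pair must get different labels; since $\Aut(\Omega_4)$ permutes twin pairs (Lemma~\ref{lem:AutoPolar}), a labeling of $\Omega_4$ descends to extra data on $\widetilde\Omega_4$. I expect the lower bound $\Dist(\Omega_4)\ge 4$ to follow by analyzing the structure of $C_8[1,2,3]$: a direct check (or the known value $\Dist(C_8[1,2,3])$, which is $4$ — an explicit $3$-labeling argument shows three labels on $8$ vertices always leaves a nontrivial circulant automorphism) forces at least $4$ labels on each component, and even granting that, I would check that three labels globally cannot break the component swap together with the within-component symmetries. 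For the upper bound $\Dist(\Omega_4)\le 4$, the efficient route is via Theorem~\ref{thm:distdet} applied to $\widetilde\Omega_4$: find a small determining set of $\widetilde\Omega_4$ that lifts; specifically, pick one vertex in each $K_4$-component of $\widetilde\Omega_4$ to get a determining set of size $2$ for $\widetilde\Omega_4$, then lift to $\Omega_4$ carefully — the point is that fixing a twin pair $[\mbf u]$ as a set plus one additional bit of data (which of $\mbf u,\mbf u+\one$) and doing this in both components, together with color classes, pins down every automorphism. Concretely I would produce a $4$-labeling of $\Omega_4$ directly: use colors so that within each component the induced pattern distinguishes $C_8[1,2,3]$ and breaks the component swap, checking the finitely many cases by the transitivity structure.

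\textbf{Main obstacle.} The delicate point is the lower bound $\Dist(\Omega_4)\ge 4$ and, relatedly, proving $\Dist(C_8[1,2,3])=4$: one must verify that \emph{every} $3$-labeling of $8$ vertices admits a nontrivial circulant automorphism, which is a genuine (if finite) case analysis over the $\binom{8}{\le}$ color distributions, using that $\Aut(C_8[1,2,3])$ is the dihedral group $D_8$ of order $16$ (since $C_8[1,2,3]$ is $C_8$ plus all short chords, its automorphism group is exactly the symmetries of the $8$-cycle). The cleanest argument is: with $3$ colors on $8$ vertices, some color class has size $\ge 3$; analyze whether a reflection or rotation of $D_8$ fixes the color classes setwise, handling the size-$(a,b,c)$ partitions with $a+b+c=8$ — symmetry of $D_8$ dramatically cuts the casework, and the few ``balanced'' distributions like $(3,3,2)$ are the only ones needing care. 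Everything else (the upper bounds, the quotient lower bound) is routine once these structural facts are in place.
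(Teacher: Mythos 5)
There is a genuine gap in your treatment of $\Omega_4$, stemming from a misidentification of the key automorphism group. You assert that $\mathrm{Aut}(C_8[1,2,3])$ is the dihedral group of order $16$ ``since $C_8[1,2,3]$ is $C_8$ plus all short chords.'' In fact $C_8[1,2,3]$ is $K_8$ minus a perfect matching (the only non-neighbor of a vertex is its antipode, i.e.\ its twin), so its complement is four disjoint edges and its automorphism group is the wreath product $S_2 \wr S_4$ of order $384$: \emph{any} permutation preserving the four twin pairs is an automorphism. Your claim is also internally inconsistent — if the automorphism group really were dihedral of order $16$, a short check shows two labels would already suffice, contradicting the value $4$ you want to prove. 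With the correct group the lower bound is far easier than the case analysis you sketch: by Lemma~\ref{lem:DisDetTwins} twins must receive distinct labels, so with $3$ labels each of the $4$ twin pairs receives one of only $\binom{3}{2}=3$ unordered label-pairs; by pigeonhole two twin pairs receive the same label-pair, and the automorphism exchanging those two twin pairs (matching colors) is nontrivial and label-preserving. This is exactly the paper's argument.

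Your upper bound for $\Omega_4$ also does not go through as written: one vertex in each $K_4$ component is \emph{not} a determining set for $\widetilde\Omega_4$, since fixing a single vertex of $K_4$ leaves an $S_3$ acting on the remaining three; you would need three vertices per component. The paper instead derives $\mathrm{Dist}(\Omega_4)\le 4$ from the $5$-distinguishing labeling of $\widetilde\Omega_4$ (your argument for $\mathrm{Dist}(\widetilde\Omega_4)=5$ is correct and matches the paper's): four labels yield $\binom{4}{2}=6\ge 5$ distinct unordered label-pairs, so one can assign five of these pairs to the vertices of $\widetilde\Omega_4$ according to the $5$-distinguishing labeling and then split each pair arbitrarily between the two twins, which is distinguishing because any label-preserving automorphism induces a label-pair-preserving automorphism of the quotient and twins carry distinct labels. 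You should replace both the lower- and upper-bound arguments for $\Omega_4$ accordingly.
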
 

\begin{proof} Each $K_4$ component of $\widetilde \Omega_4$ has distinguishing number 4; to distinguish between the two isomorphic components, we need 5 labels in total. Note that using 4 labels we can create $\binom{4}{2}=6$ distinct label-pairs and then use 5 of these pairs to label the vertices of $\widetilde{\Omega}_4$.  This provides a $5$-distinguishing labeling of $\widetilde{\Omega}_4$ with label-pairs, and extends naturally to a 4-distinguishing labeling of $\Omega_{4}$ with twin pairs in $\Omega_4$ assigned the labels from the pairs of assigned to vertices of $\widetilde\Omega_4$. See Figure~\ref{fig:Ortho4}.

Note that if its components cannot be 3-distinguished, then neither can $\Omega_4$.  Let $C$ be the component of even vertices of $\Omega_4$.  Recall from Example \ref{ex:O4}, that $C = C_8(1,2,3)$, so $V(C)$ consists of 4 fs with an edge between every pair of vertices that are not twins.  Suppose we label $C$ with 3 labels. Since there are precisely $\binom{3}{2}=3$ distinct label-pairs for the 4 distinct twin pairs, two twin pairs, say $\{\mbf u, \mbf u + \one\}$ and $\{\mbf w, \mbf w+\one\}$, are assigned the same pair of labels.  Without loss of generality we can assume that the labels on $\mbf u$ and $\mbf w$ are red and the labels on $\mbf u + \one$ and $\mbf w + \one$ are green (or replace $\mbf w$ with $\mbf w + \one$).  Let $\alpha$ be the vertex map of $C$ that transposes $\mbf u$ and $\mbf w$, transposes $\mbf u + \one$ and $\mbf w + \one$, and fixes all other vertices.  Since the complement of $C$ is a set of 4 disjoint edges between twin pairs, and since $\alpha$ transposes two of these edges, $\alpha$ is an automorphism of $\overline C$ and thus of $C$ itself. Further $\alpha$  preserves label classes. Thus this is not a 3-distinguishing labeling of $C$.  We conclude that there is no distinguishing 3-labeling for $C$ and therefore none for $\Omega_4$. Thus we have proved that $\Dist(\Omega_4) = 4$.
\end{proof}

\begin{figure}[htbp] 
   \centering
   \includegraphics[width=4in]{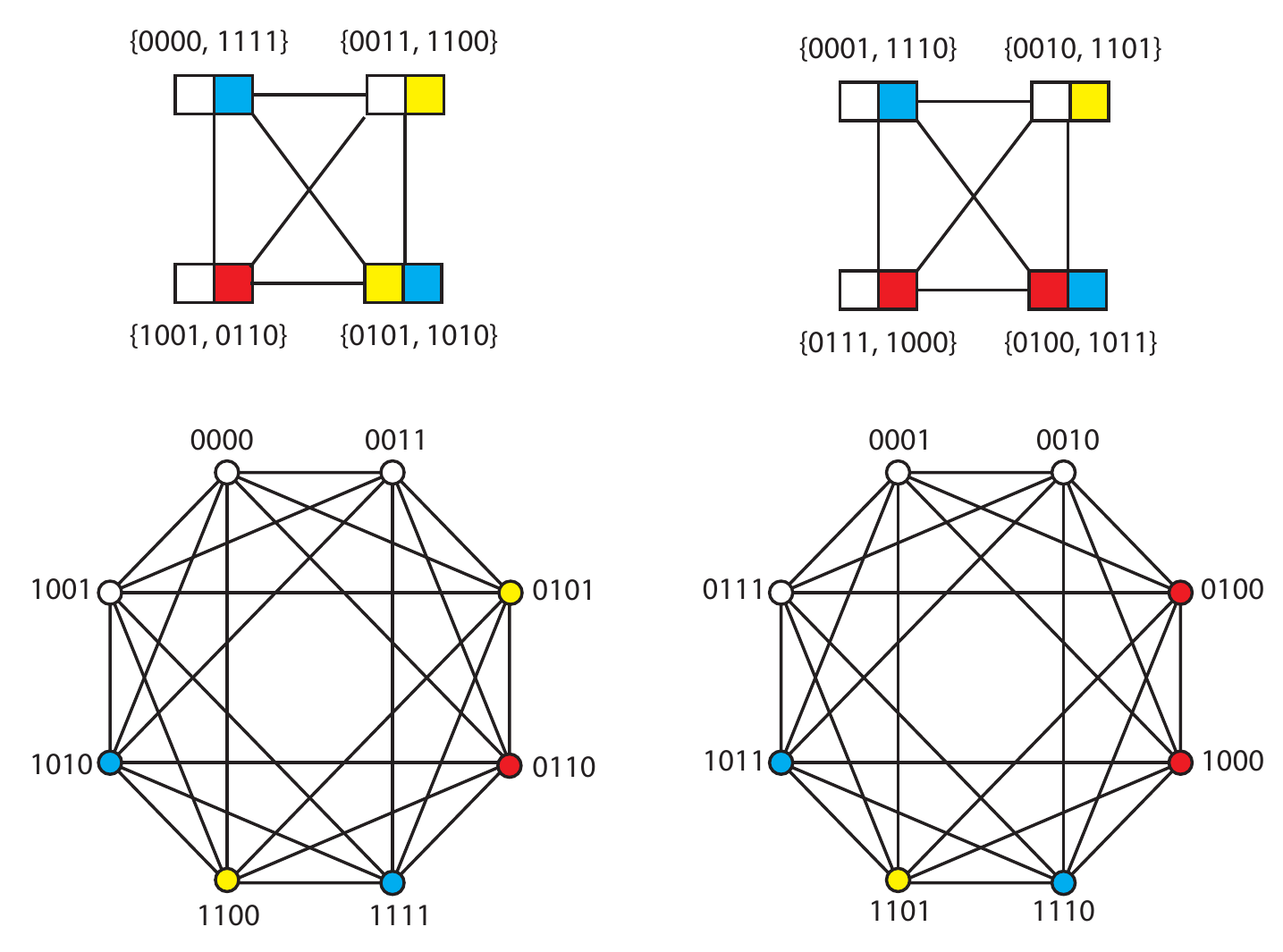} 
   \caption{$\widetilde{\Omega}_4$ with a 5-distinguishing labeling and $\Omega_4$ with a 4-distinguishing labeling }
   \label{fig:Ortho4}
\end{figure}

Next we will look more carefully at adjacencies within, and automorphisms of, $\wtO$. By Lemma~\ref{lem:twins},
\begin{align}\label{equiv:twins}
 \mbf u \text{ and } \mbf x \text{ are adjacent} \notag
& \iff \mbf u \text{ and } \mbf x + \mbf 1 \text{ are adjacent}\\ \notag
& \iff \mbf u + \mbf 1 \text{ and } \mbf x \text{ are adjacent}\\
& \iff \mbf u + \mbf 1 \text{ and } \mbf x +\mbf 1 \text{ are adjacent}.
\end{align}
This gives a stronger interpretation of the adjacency of $[\mbf u]$ and $[\mbf x]$ than is prescribed in the definition of a quotient graph. One implication of this is given below.

\begin{lemma}\label{lem:TildeTwinFree}\rm $\wtO$  is twin-free.
\end{lemma}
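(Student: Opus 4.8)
The plan is to show that $\wtO$ has no twin pairs by contradiction: suppose $[\mbf u]$ and $[\mbf w]$ are distinct vertices of $\wtO$ with $N([\mbf u]) = N([\mbf w])$, and derive that either $[\mbf u] = [\mbf w]$ or else $\Omega_{2k}$ itself contains a forbidden triple of pairwise twins, contradicting Lemma~\ref{lem:twins}. First I would unwind what it means for $[\mbf u]$ and $[\mbf w]$ to be twins in the quotient. By the strengthened adjacency equivalence \eqref{equiv:twins}, $[\mbf x]$ is adjacent to $[\mbf u]$ in $\wtO$ if and only if $\mbf x$ (equivalently $\mbf x + \mbf 1$) is adjacent to $\mbf u$ in $\Omega_{2k}$; that is, the neighborhood of $[\mbf u]$ in $\wtO$ is exactly $\{[\mbf x] : \mbf x \in N(\mbf u)\}$. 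So $N([\mbf u]) = N([\mbf w])$ in $\wtO$ says: for every $\mbf x$, $\mbf x \in N(\mbf u)$ if and only if $\mbf x$ is equivalent to some vertex of $N(\mbf w)$, i.e.\ $\mbf x \in N(\mbf w)$ or $\mbf x + \mbf 1 \in N(\mbf w)$. But $\mbf x + \mbf 1 \in N(\mbf w)$ is equivalent to $\mbf x \in N(\mbf w)$ since $\mbf w$ and $\mbf w + \mbf 1$ are twins in $\Omega_{2k}$. Hence $N([\mbf u]) = N([\mbf w])$ in $\wtO$ forces $N(\mbf u) = N(\mbf w)$ in $\Omega_{2k}$.

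From there the conclusion is immediate from Lemma~\ref{lem:twins}: if $N(\mbf u) = N(\mbf w)$ in $\Omega_{2k}$ then $\mbf u$ and $\mbf w$ are twins in $\Omega_{2k}$, so $\mbf w = \mbf u$ or $\mbf w = \mbf u + \mbf 1$; in either case $[\mbf u] = [\mbf w]$ in $\wtO$, contradicting the assumption that they were distinct vertices of the quotient. One small point worth spelling out carefully is the direction ``$[\mbf u]$ and $[\mbf w]$ adjacent in $\wtO$ $\Longrightarrow$ $\mbf u$ and $\mbf w$ adjacent in $\Omega_{2k}$,'' since the bare definition of a quotient graph only gives adjacency through \emph{some} representatives; this is exactly where \eqref{equiv:twins} is used, because it collapses all four representative-pairs into a single condition.

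The main obstacle — really the only thing requiring care — is making sure the translation between quotient-adjacency and $\Omega_{2k}$-adjacency is genuinely an ``if and only if'' at the level of the specific vertices $\mbf u$ and $\mbf w$, not merely at the level of existence of representatives; the paper has already isolated this as \eqref{equiv:twins}, so once that is invoked the argument is essentially a two-line deduction from Lemma~\ref{lem:twins}. I would also note in passing (or leave implicit) that $[\mbf u]$ is never adjacent to itself in $\wtO$, so there is no issue with loops confusing the neighborhood comparison. No delicate estimates or constructions are needed here — this lemma is a structural corollary of the twin analysis already done for $\Omega_{2k}$.
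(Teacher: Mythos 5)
Your argument is correct and is essentially the paper's own proof: both use the strengthened adjacency equivalence \eqref{equiv:twins} to convert equality of quotient neighborhoods $N([\mbf u]) = N([\mbf w])$ into equality of neighborhoods $N(\mbf u) = N(\mbf w)$ in $\Omega_{2k}$, and then invoke Lemma~\ref{lem:twins} to conclude $[\mbf u] = [\mbf w]$. The only cosmetic difference is that you frame it as a contradiction while the paper argues directly.
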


\begin{proof}
Suppose $N([\mbf v]) = N([\mbf u])$ in $\wtO$. Then for all $\mbf w \in V(\Omega_{2k})$,
\[
\mbf w \in N(\mbf v) \iff [\mbf w] \in N([\mbf v]) \iff [\mbf w] \in N([\mbf u]) \iff \mbf w \in N(\mbf u).
\]
Hence $N(\mbf v) = N(\mbf u)$.  By Lemma~\ref{lem:twins}, $\mbf v = \mbf u + \mbf 1$ in $\Omega_{2k}$, which means that $[\mbf v] = [\mbf u]$ in $\wtO$.
\end{proof}

Now let us look at the automorphism group.  By Lemma~\ref{lem:AutoPolar}, we can define a homomorphism  $\phi:\Aut(\Omega_{2k}) \to \Aut(\wtO)$ by $\phi(\alpha)([\mbf u]) = [\alpha(\mbf u)]$ for all $[\mbf u] \in V(\wtO)$.
For any $\widetilde \beta\in \Aut(\wtO)$, define $\beta \in \Aut(\Omega_{2k})$ by arbitrarily designating one vertex in each twin pair with a subscript of $0$, and defining $\beta(\mbf w_0) = (\widetilde \beta([\mbf w_0])_0$ and $\beta(\mbf w_0 + \mbf 1) = \beta(\mbf w_0) + \mbf 1.$
This shows that $\phi$ is surjective.
 The kernel of $\phi$ consists of all automorphisms of $\Omega_{2k}$ that simply interchange the vertices in some subset of the twin pairs. 
 More precisely, recall that $\pi_{\mbf u}$ is the automorphism of $\Omega_{2k}$ that interchanges $\mbf u$ and  $\mbf u + \mbf 1$ while fixing all other vertices. 
 There are $|V(\wtO)|= 2^{2k-1}$ such automorphisms  
 and they commute pairwise.  
 Let $U=\{[\mbf u_1], \ldots, [\mbf u_n]\} \subseteq V(\wtO)$, and let $\pi_U$ denote the composition $\pi_{\mbf u_1}\circ \cdots \circ \pi_{\mbf u_n}$. Then 
 $ \ker(\phi) = \{ \pi_U \mid U \subseteq V(\wtO)\} \cong (\mathbb Z_2)^{2^{2k-1}}$ and 
$
 \Aut(\wtO)  \cong \Aut(\Omega_{2k}) / (\mathbb Z_2)^{2^{2k-1}}.
$

\section{Determining $\wtO$} \label{sec:DetQuotient}

\begin{definition} \rm For any $[\mbf x] \in V(\wtO)$, we define 
\[
\wt([\mbf x]) = (\wt( \mbf x) , \wt(\mbf x + \mbf 1)) = (\wt(\mbf x), 2k-\wt(\mbf x)).
\]\end{definition}  

To eliminate ambiguity,  we assume $\wt(\mbf x) \leq k$ and thus that $\wt(\mbf x)\leq 2k-\wt(\mbf x)$. For example, $\{0010 1100,  1101 0011\} \in V(\widetilde\Omega_8)$ has weight $(3, 5)$.

In what follows, we will be concentrating on the odd vertices in $\wtO$.  Our goal is to show that every odd vertex in $\wtO$ has a unique set of neighbors among the set of vertices of weight $(k-1, k+1)$.

In the original orthogonality graph, let $\mbf u$ be a vertex of odd weight $m$, with $1 < m \leq k$. Let $\mbf v$ be a vertex of weight $k$.  Then $\mbf u + \mbf v$ is a neighbor of $\mbf u$. 
If  $|\supp(\mbf u) \cap \supp(\mbf v)| =t$,  then the  neighbor $\mbf u + \mbf v$ of $\mbf u$ has weight exactly $m+k-2t$. See Figure \ref{fig:WeightNeighbor}.

\begin{figure}[htbp] 
   \centering
   \includegraphics[width=2.5in]{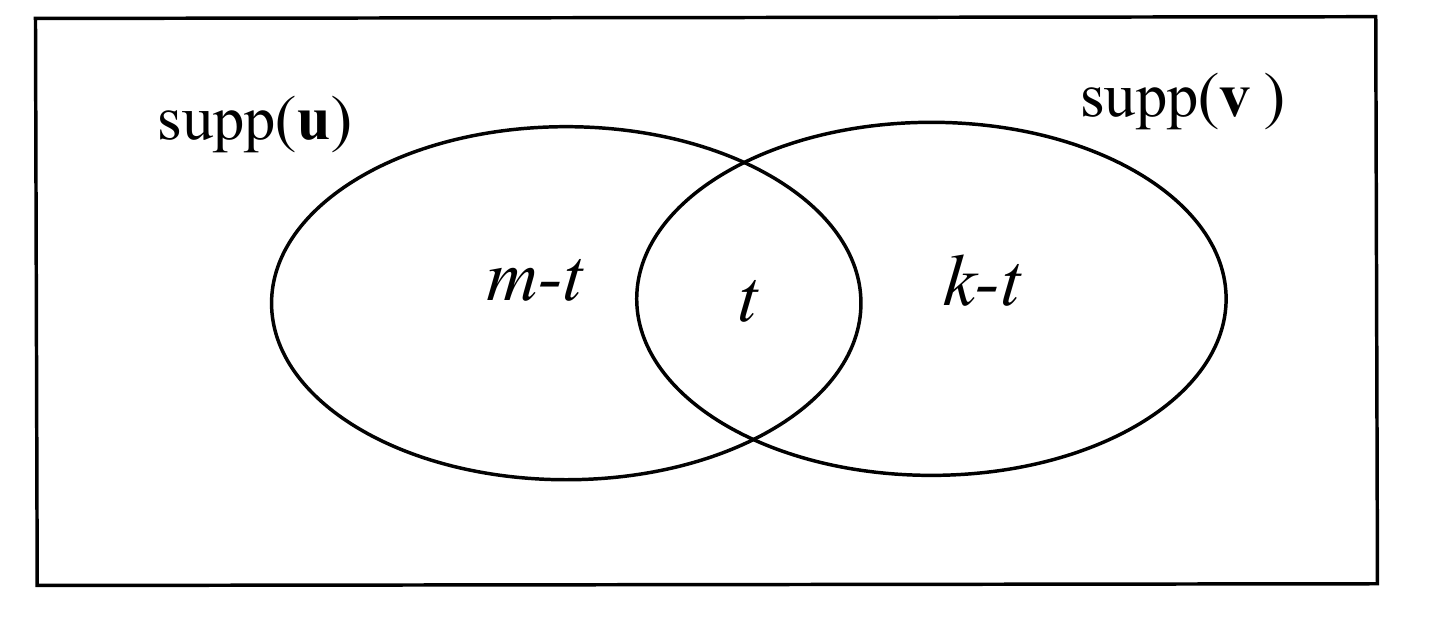} 
   \caption{The weight of a neighbor of $\mbf u$ is $m + k - 2t$.}
   \label{fig:WeightNeighbor}
\end{figure}

The number of neighbors of $\mbf u$ of weight $m+k-2t$ is the number of $\mbf v$ such that $|\supp(\mbf u) \cap \supp(\mbf v)| =t$, which is 
\[
\binom{m}{t} \binom{2k-m}{k-t}.
\]
Now, $m+k-2t = k-1 \iff t = \frac{m+1}{2}$ and $m+k-2t = k+1 \iff t= \frac{m-1}{2}$.
Using binomial identities,
\[
\binom{m}{\frac{m+1}{2}} \binom{2k-m}{k-\frac{m+1}{2}}= \binom{m}{\frac{m-1}{2}} \binom{2k-m}{k-\frac{m-1}{2}}.
\]
This makes sense because, by Lemma~\ref{lem:twins}, the neighbors of $\mbf u$ of weight $k+1$  and of weight $k-1$ can be matched up into twin pairs. Thus, in $\wtO$, the number of neighbors of $[\mbf u]$ of weight $(k-1, k+1)$  is the common value of the expression in the equation above.

\medskip

Note that the theorems and propositions that follow are written for $k\geq 1$.  However, because their statements involve an odd integer  $m$ with $1<m\leq k$,  for Lemma \ref{lem:Binom} and Corollary \ref{cor:DiffWeights} technically $k\geq 3$, while for Lemma \ref{lem:mLessk} and Corollary \ref{cor:SameWeight} technically $k\geq 4$.   This does not change the fact that all theorems are true for all $k\geq 1$. 

\begin{lemma} \label{lem:Binom} \rm 
For distinct odd $m, n$, both less than or equal to $k$,  
\[
\binom{m}{\frac{m+1}{2}} \binom{2k-m}{k-\frac{m+1}{2}} \neq  \binom{n}{\frac{n+1}{2}} \binom{2k-n}{k-\frac{n+1}{2}}.
\]
\end{lemma}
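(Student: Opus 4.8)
The plan is to show that the function
\[
g(m) = \binom{m}{\frac{m+1}{2}} \binom{2k-m}{k-\frac{m+1}{2}}
\]
is strictly monotonic in the odd variable $m$ over the range $1 \le m \le k$, which immediately gives injectivity and hence the claimed inequality. To make the monotonicity tractable, I would compare consecutive odd values $m$ and $m+2$ by forming the ratio $g(m+2)/g(m)$ and showing it is always strictly greater than $1$ (or always strictly less than $1$) on the relevant range. Writing $j = \frac{m+1}{2}$, so that $g(m) = \binom{2j-1}{j}\binom{2k-2j+1}{k-j}$, the ratio becomes a product of four falling/rising factorial quotients, each of which simplifies to a ratio of small linear expressions in $j$ and $k$; after cancellation one is left with a rational function of $j$ and $k$ whose sign is easy to pin down.

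Concretely, the key steps are: (1) reindex via $j=\frac{m+1}{2}$ and record that $j$ ranges over $1 \le j \le \lceil (k+1)/2 \rceil$; (2) compute
\[
\frac{g(m+2)}{g(m)} = \frac{\binom{2j+1}{j+1}}{\binom{2j-1}{j}}\cdot \frac{\binom{2k-2j-1}{k-j-1}}{\binom{2k-2j+1}{k-j}},
\]
and simplify each binomial ratio using $\binom{n+2}{r+1}/\binom{n}{r} = \frac{(n+2)(n+1)}{(r+1)(n-r+1)}$; (3) collect the result into a single rational expression in $j,k$ and verify that its numerator exceeds (or is less than) its denominator for all admissible $j$, giving strict monotonicity; (4) conclude that $g$ is injective on odd integers in $[1,k]$, so distinct odd $m,n \le k$ give distinct values, which is exactly the statement. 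I would also note that the binomial identity displayed just before the lemma lets us use either the $\frac{m+1}{2}$ form or the $\frac{m-1}{2}$ form interchangeably, so there is no loss in working with whichever is cleaner.

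The main obstacle I anticipate is step (3): after cancellation the comparison will be between two quartics (or cubics) in $j$ with coefficients depending on $k$, and it is not a priori obvious that the inequality holds uniformly for all $j$ in the allowed window without some care near the endpoints $j=1$ and $j\approx k/2$. The ratio of the two "left" binomials pushes the value up as $j$ increases, while the ratio of the two "right" binomials pushes it down, so the net behavior is a genuine competition; I would expect to need the hypothesis $m \le k$ (equivalently $2j-1 \le k$, i.e. $j$ bounded well below $k$) to ensure the increasing factor dominates. A clean way to handle this is to show that each individual binomial ratio, as a function of $j$, is itself monotonic, and then bound the product; alternatively, one can take logarithms and compare discrete derivatives. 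If a fully uniform argument proves delicate, a fallback is to treat $g$ as a product $\binom{m}{\frac{m+1}{2}}$ times $\binom{2k-m}{k-\frac{m+1}{2}}$ and argue that doubling arguments in the first factor grow faster than the second shrinks, using the crude bounds $\binom{2j+1}{j+1}/\binom{2j-1}{j} = \frac{(2j+1)(2j)}{(j+1)j} > 2$ against $\binom{2k-2j+1}{k-j}/\binom{2k-2j-1}{k-j-1} = \frac{(2k-2j+1)(2k-2j)}{(k-j)(k-j+1)} < 4$, and then squeezing the range where this is not yet decisive.
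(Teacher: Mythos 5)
Your plan is essentially the paper's proof: both compare consecutive odd terms via the ratio $g(m+2)/g(m)$, simplify the two binomial quotients (to $\tfrac{2(2j+1)}{j+1}$ and $\tfrac{2(2k-2j+1)}{k-j+1}$), and reduce strict monotonicity of the sequence to a linear inequality in the index. The obstacle you anticipate in step (3) does not materialize: the ratio collapses to $\tfrac{(2j+1)(k-j+1)}{(j+1)(2k-2j+1)}$, whose numerator minus denominator is exactly $2j-k<0$ for all admissible $j$ (this is the paper's condition $n\leq k$), so the sequence is strictly decreasing and no endpoint analysis or crude bounding is needed.
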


The proof is by binomial computation and is contained in Appendix A. 

\begin{corollary}\label{cor:DiffWeights} \rm 
For distinct odd $m, n$, both less than or equal to $k$,  vertices in $\wtO$ of weight $(m, 2k-m)$ have a different number of neighbors of weight $(k-1, k+1)$ than vertices in $\wtO$ of weight $(n, 2k-n)$.
\end{corollary}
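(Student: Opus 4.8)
The plan is to read this off by combining the neighbor count worked out in the discussion immediately preceding Lemma~\ref{lem:Binom} with the inequality asserted by Lemma~\ref{lem:Binom} itself; essentially all of the genuine difficulty has already been quarantined into Lemma~\ref{lem:Binom}, whose proof is deferred to Appendix~A, so what remains is bookkeeping.

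First I would recall the count. Fix $[\mbf u]\in V(\wtO)$ of weight $(m,2k-m)$ with $m$ odd and $m\le k$. The neighbors of $\mbf u$ in $\Omega_{2k}$ are exactly the vectors $\mbf u+\mbf v$ with $\wt(\mbf v)=k$, and if $|\supp(\mbf u)\cap\supp(\mbf v)|=t$ then $\wt(\mbf u+\mbf v)=m+k-2t$; hence the neighbors of $\mbf u$ of weight $k-1$ correspond bijectively to the weight-$k$ vectors $\mbf v$ with $t=\frac{m+1}{2}$, of which there are $\binom{m}{\frac{m+1}{2}}\binom{2k-m}{k-\frac{m+1}{2}}$. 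Since a neighbor $\mbf w$ of $\mbf u$ of weight $k-1$ is paired with the neighbor $\mbf w+\mbf 1$ of weight $k+1$, these collapse to a single vertex of $\wtO$, so the number of neighbors of $[\mbf u]$ in $\wtO$ of weight $(k-1,k+1)$ is exactly
\[
\binom{m}{\frac{m+1}{2}}\binom{2k-m}{k-\frac{m+1}{2}},
\]
a quantity that depends only on the weight class $(m,2k-m)$ and not on the particular vertex $[\mbf u]$.

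Then I would simply invoke Lemma~\ref{lem:Binom}: for distinct odd $m,n\le k$ the two products above (for $m$ and for $n$) are unequal, which is precisely the statement that weight-$(m,2k-m)$ vertices and weight-$(n,2k-n)$ vertices of $\wtO$ have different numbers of neighbors of weight $(k-1,k+1)$. The one small caveat is the boundary value $m=1$ (or $n=1$): the preceding discussion was phrased for $1<m\le k$, but the displayed formula reduces to $\binom{1}{1}\binom{2k-1}{k-1}$ and is easily seen still to give the correct count, while Lemma~\ref{lem:Binom} is stated for all odd $m\le k$, so this case needs no special treatment. I do not anticipate any real obstacle in writing this out; the substantive work is entirely contained in Lemma~\ref{lem:Binom}.
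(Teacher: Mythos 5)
Your proposal is correct and matches the paper's approach exactly: the paper treats Corollary~\ref{cor:DiffWeights} as an immediate consequence of the neighbor count $\binom{m}{\frac{m+1}{2}}\binom{2k-m}{k-\frac{m+1}{2}}$ derived in the discussion preceding Lemma~\ref{lem:Binom}, combined with the inequality of Lemma~\ref{lem:Binom}. Your remark about the boundary case $m=1$ is a reasonable extra check (and is in fact covered, since the appendix's monotonicity argument starting at $n=3$ compares against the $n-2=1$ term).
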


We next consider distinct odd vertices in $\wtO$ of the same weight.  We start with a technical lemma about vertices in the original orthogonality graph $\Omega_{2k}$.

\begin{lemma} \label{lem:mLessk}\rm 
Let  $1< m < k$, with $m$  odd.  Let $\mbf u$ and $\mbf w$ be distinct  vertices in $\Omega_{2k}$ with $\wt(\mbf u) = \wt(\mbf w)= m$. Then there exists  $\mbf y \in V(\Omega_{2k})$ with $\wt(\mbf y) = k-1$ that is adjacent to $\mbf u$ but not to $\mbf w$.
 \end{lemma}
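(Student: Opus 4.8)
The plan is to produce the witness $\mbf y$ explicitly by describing its support relative to $\supp(\mbf u)$ and $\supp(\mbf w)$. Since $\mbf u$ and $\mbf w$ are distinct of the same weight $m$, neither support contains the other, so there is an index $a \in \supp(\mbf u)\setminus\supp(\mbf w)$ and an index $b \in \supp(\mbf w)\setminus\supp(\mbf u)$. The idea is to choose $\mbf y$ of weight $k-1$ so that $\wt(\mbf y + \mbf u) = k$ (making $\mbf y$ adjacent to $\mbf u$) while $\wt(\mbf y + \mbf w) \ne k$ (making $\mbf y$ non-adjacent to $\mbf w$). Recall from the discussion preceding the lemma that if $\mbf y$ meets $\supp(\mbf u)$ in $t$ positions then $\wt(\mbf y+\mbf u) = \wt(\mbf y) + m - 2t = (k-1) + m - 2t$; this equals $k$ exactly when $t = \tfrac{m-1}{2}$, which is a nonnegative integer since $m$ is odd. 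So the first requirement is simply: $|\supp(\mbf y)\cap\supp(\mbf u)| = \tfrac{m-1}{2}$.

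The construction: let $t = \tfrac{m-1}{2}$. Build $\supp(\mbf y)$ by taking $t$ elements of $\supp(\mbf u)$, being careful to include $a$ (possible since $t \ge 1$ as $m > 1$) and to avoid $b$ (automatic, since $b \notin \supp(\mbf u)$), and then filling out the remaining $k - 1 - t$ positions of $\mbf y$ from outside $\supp(\mbf u)$ while avoiding $b$. One must check this filling is feasible, i.e.\ that there are at least $k - 1 - t$ indices outside $\supp(\mbf u)\cup\{b\}$; since $|\supp(\mbf u)\cup\{b\}| = m+1$ and $k-1-t = k - 1 - \tfrac{m-1}{2} = k - \tfrac{m+1}{2}$, feasibility amounts to $2k - (m+1) \ge k - \tfrac{m+1}{2}$, i.e.\ $k \ge \tfrac{m+1}{2}$, which holds since $m < k$. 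With such $\mbf y$ we have $\wt(\mbf y) = k-1$ and $\wt(\mbf y+\mbf u) = k$, so $\mbf y \in N(\mbf u)$.

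It remains to verify $\mbf y \notin N(\mbf w)$, i.e.\ $|\supp(\mbf y)\cap\supp(\mbf w)| \ne \tfrac{m-1}{2}$. Here is where the main subtlety lies: a naive choice of the ``$t$ elements of $\supp(\mbf u)$'' could by accident meet $\supp(\mbf w)$ in exactly $\tfrac{m-1}{2}$ positions once the outside part is added in, so the count must be controlled. The fix is to exploit the freedom in choosing which $t$ elements of $\supp(\mbf u)$ to use and which outside elements to use: moving one chosen index out of $\supp(\mbf u)\cap\supp(\mbf w)$ and into $\supp(\mbf u)\setminus\supp(\mbf w)$ (or doing the analogous swap on the outside portion near $b$) changes $|\supp(\mbf y)\cap\supp(\mbf w)|$ by exactly $1$ while preserving $|\supp(\mbf y)\cap\supp(\mbf u)| = t$ and $\wt(\mbf y) = k-1$. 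Since such a single-step adjustment is always available (we have the index $a$ on the $\mbf u$-side and $b$ on the $\mbf w$-side to pivot on, using $m < k$ to guarantee room), we can always land on a value of $|\supp(\mbf y)\cap\supp(\mbf w)|$ different from $\tfrac{m-1}{2}$. I expect the bulk of the writeup to be the careful bookkeeping in this last step — tracking the three regions $\supp(\mbf u)\cap\supp(\mbf w)$, $\supp(\mbf u)\setminus\supp(\mbf w)$, $\supp(\mbf w)\setminus\supp(\mbf u)$, and the complement — to confirm that the two constraints (weight $k-1$, intersection $t$ with $\supp(\mbf u)$) leave at least two attainable values for the intersection with $\supp(\mbf w)$, hence one avoiding $\tfrac{m-1}{2}$.
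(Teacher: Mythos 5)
Your approach is correct and completable, but it is organized differently from the paper's. The paper constructs the difference vector $\mbf v$ of weight $k$ (so that $\mbf y = \mbf u + \mbf v$ is automatically adjacent to $\mbf u$) by prescribing explicit counts of support positions in each of the four regions determined by $\supp(\mbf u)$ and $\supp(\mbf w)$, splitting into cases according to whether $r = |\supp(\mbf u)\cap \supp(\mbf w)|$ is zero, odd, or even; in each case the prescribed counts force $\wt(\mbf y + \mbf w)$ to a specific value less than $k$. You instead build $\mbf y$ directly, reduce both adjacency conditions to intersection counts ($|\supp(\mbf y)\cap\supp(\mbf u)| = \tfrac{m-1}{2}$ exactly, while $|\supp(\mbf y)\cap\supp(\mbf w)| \neq \tfrac{m-1}{2}$), and argue by perturbation that the second condition can be arranged. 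This avoids the parity case split entirely and isolates the real content: only one ``bad value'' of the intersection with $\supp(\mbf w)$ must be dodged. The one place where your writeup is genuinely a sketch is the claim that a count-shifting swap is always available; the reason you give (pivoting on $a$ and $b$) is not sufficient as stated --- for instance, to raise $|\supp(\mbf y)\cap\supp(\mbf w)|$ by adding $b$ you also need an element of $\supp(\mbf y)$ lying outside $\supp(\mbf u)\cup\supp(\mbf w)$ to delete, and there may be none. The bookkeeping does close, however: with $|\supp(\mbf y)\cap\supp(\mbf u)|$ held at $t=\tfrac{m-1}{2}$, the remaining $k-\tfrac{m+1}{2}\ge 2$ positions of $\supp(\mbf y)$ are split between $\supp(\mbf w)\setminus\supp(\mbf u)$ (which has $m-r\ge 1$ elements, since $\mbf u \neq \mbf w$ of equal weight) and the complement of $\supp(\mbf u)\cup\supp(\mbf w)$ (which has $2k-2m+r\ge 2$ elements), and a short interval computation shows the count landing in $\supp(\mbf w)\setminus\supp(\mbf u)$ always has at least two attainable consecutive values. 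So your argument yields the lemma, at the cost of exactly the careful accounting you anticipated; the paper's version trades that accounting for an explicit two-case construction.
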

 
\begin{proof}We divide into two cases.

{\bf Case 1.} Assume $|\supp(\mbf u) \cap \supp(\mbf w)| = 0$.  To find a neighbor $\mbf y$  of $\mbf u$ with $\wt(\mbf y) = k-1$, we must find $\mbf v \in V(\Omega_{2k})$ of weight $ k$ such that $\mbf y = \mbf u + \mbf v$ satisfies
\[ 
\wt(\mbf y) = \wt(\mbf u + \mbf v) = |\supp(\mbf u + \mbf v)| = |\supp(\mbf u)\,  \triangle \, \supp(\mbf v) | = k-1.
\]

We can construct such a $\mbf v$ by selecting  $\frac{m+1}{2}$ positions from each of the disjoint sets $\supp(\mbf u)$ and $\supp(\mbf w)$.  This accounts for $m+1$ of the necessary $k$ positions in $\supp(\mbf v)$.  Note that $m<k \Longrightarrow m+1 \leq k$.  Thus we need to add (the nonnegative number) $k-(m+1)$ positions from the $2k-2m$ positions outside both $\supp(\mbf u) $ and $ \supp(\mbf w)$ to $\supp(\mbf v)$ to achieve $\wt(\mbf v)=k$. See Figure~\ref{fig:SuppDisjointv2} for a Venn diagram showing that this is achievable. Note that $|\supp(\mbf y)| = |\supp(\mbf u)\,  \triangle \, \supp(\mbf v)| = k-1,$ as desired. Moreover, since $
\wt(\mbf y + \mbf w) = \wt\big ( (\mbf u + \mbf v) + \mbf w \big )
$ and $|\supp(\mbf u)\, \triangle\, \supp(\mbf v)\, \triangle \,\supp(\mbf w)| = k-2 < k$, $\mbf y$ is not adjacent to $\mbf w$.

\begin{figure}[htbp] 
   \centering
   \includegraphics[width=2.7in]{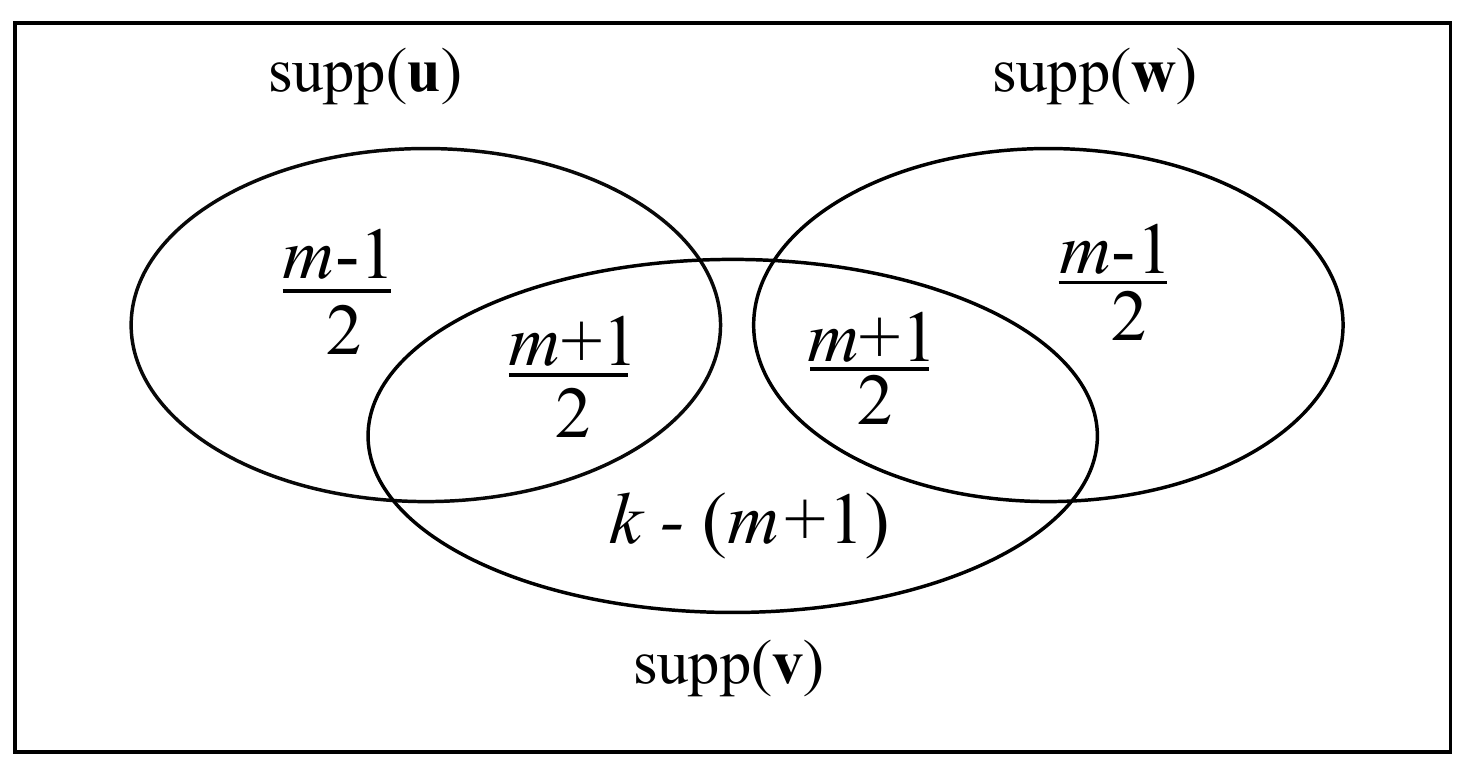} 
   \caption{Case 1: $|\supp(\mbf u) \cap \supp(\mbf w)| = 0$}
   \label{fig:SuppDisjointv2}
\end{figure}
 
{\bf Case 2.} Assume $ |\supp(\mbf u) \cap \supp(\mbf w)|= r \ge 1$. 
First suppose $r = 2b+1$ for some $b\ge 0$. Since $m$ is also odd,  $m = 2a+r = 2a + 2b+1$ for some $a \ge 1$. We can construct an appropriate $\mbf v$ by choosing positions for its support in the following way.  Choose $a$ positions from $\supp(\mbf u) \setminus \supp(\mbf w)$, $a$ positions from $\supp(\mbf w) \setminus \supp(\mbf u)$, $b$ positions from $\supp(\mbf w)\cap \supp(\mbf u)$, and $k-(2a+b+1)$ positions from the complement of $\supp(\mbf w) \cup \supp(\mbf u)$. Note that the number of positions outside $\supp(\mbf w) \cup \supp(\mbf u)$ is $2k - (4a+2b+1)> 2[k -(2a + b +1)]$, so we have plenty of positions from which to choose. See Figure~\ref{fig:SuppNonDisjointOdd}. It is easy to check that  $\mbf y = \mbf u + \mbf v$ has weight $k-1$
and that $\mbf y + \mbf w$ has weight $k-(b+1) \leq k-1 < k$, so $\mbf y$ is not adjacent to $\mbf w$. 

\begin{figure}[htbp] 
   \centering
   \includegraphics[width=2.7in]{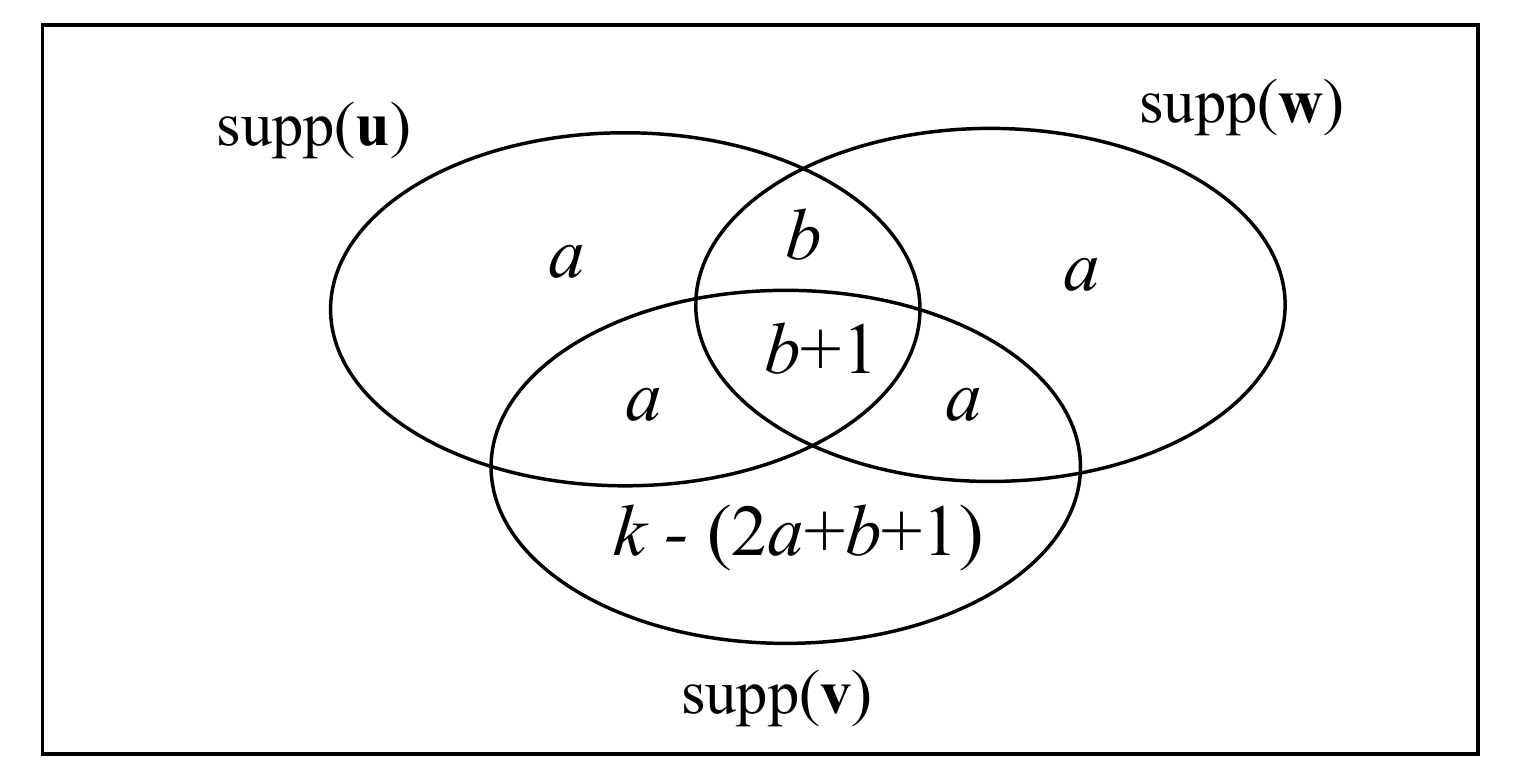} 
   \caption{Case 2: $r = |\supp(\mbf u) \cap \supp(\mbf w)|$ is odd.} 
   \label{fig:SuppNonDisjointOdd}
\end{figure}

Now suppose that $r = 2b$ for some $b \ge 1$. Since $m$ is odd, $m = (2a+1) +r = 2a+2b+1$ for some $a \ge 0$. We can find $\mbf v$, and therefore $\mbf y$, using Figure~\ref{fig:SuppNonDisjointEven}.  That is, we choose $a+1$ positions from $\supp(\mbf u) \setminus \supp(\mbf w)$, $a+1$ positions from $\supp(\mbf w) \setminus \supp(\mbf u)$, $b$ positions from $\supp(\mbf w)\cap \supp(\mbf u)$, and $k-(2a+b+2)$ from the complement of $\supp(\mbf w) \cup \supp(\mbf u)$. We can again easily verify that  $\mbf y = \mbf u + \mbf v$ has weight $k-1$ and that $\mbf y + \mbf w$ has weight $k-(b+2) \leq k-3 < k$, so $\mbf y$ is not a neighbor of $\mbf w$.\end{proof}

\begin{figure}[htbp] 
   \centering
   \includegraphics[width=2.7in]{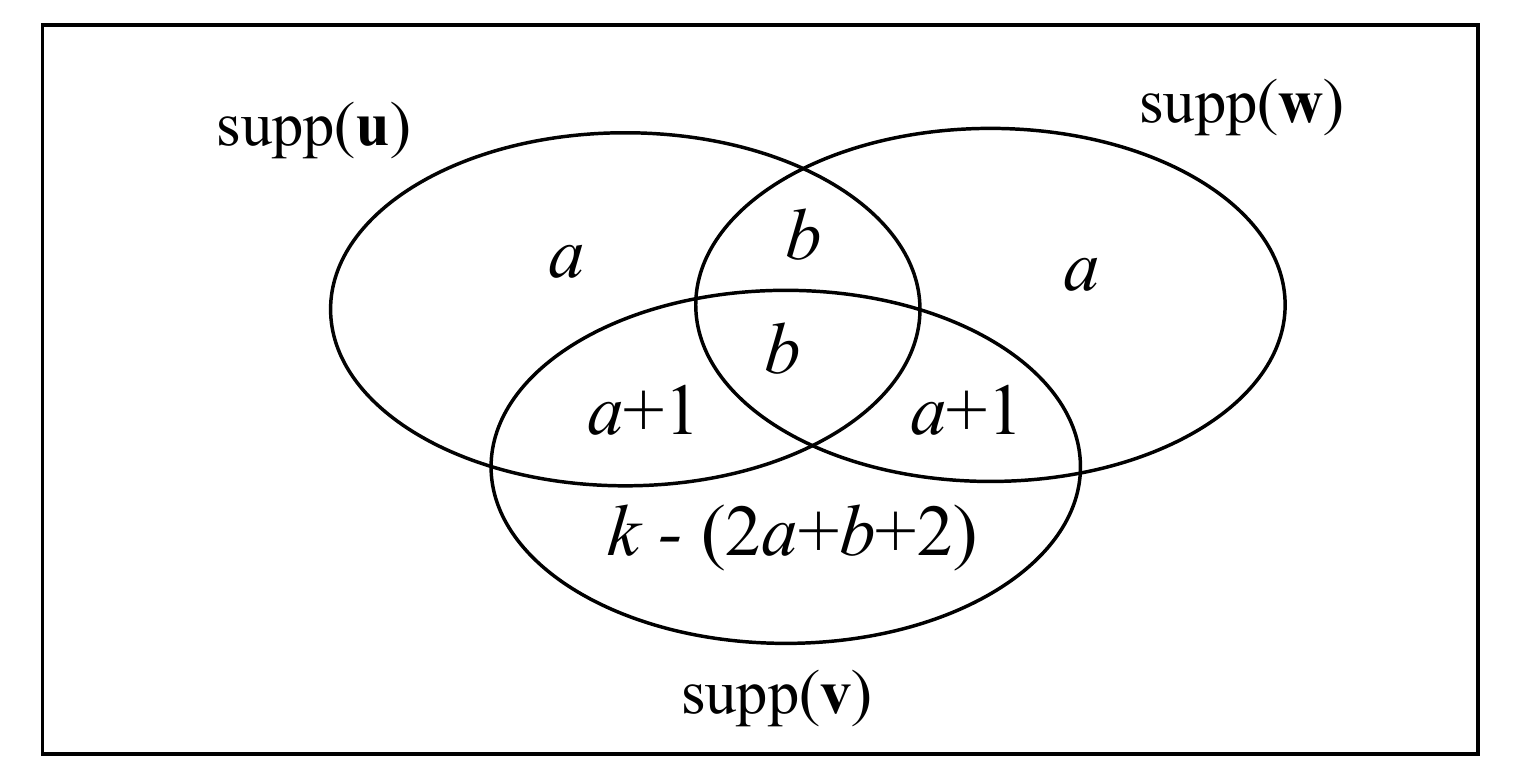} 
   \caption{Case 2: $r = |\supp(\mbf u) \cap \supp(\mbf w)|$ is even.} 
   \label{fig:SuppNonDisjointEven}
\end{figure}

By passing to the quotient graph, we have the following.
\newpage

\begin{corollary}\label{cor:SameWeight} \rm 
Let $1< m < k$, with $m$  odd.   Let $[\mbf u]$ and $[\mbf w]$ be distinct vertices in $\wtO$ of the same weight   $(m, 2k-m)$. Then there exists a vertex $[\mbf y]$ of weight $(k-1, k+1)$ that is adjacent to $[\mbf u]$ but not to $[\mbf w]$.   \end{corollary}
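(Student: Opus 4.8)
The plan is to deduce Corollary~\ref{cor:SameWeight} directly from Lemma~\ref{lem:mLessk} by passing to the quotient graph $\wtO$, using the strengthened adjacency condition recorded in~(\ref{equiv:twins}). Given distinct vertices $[\mbf u]$ and $[\mbf w]$ in $\wtO$ of common weight $(m, 2k-m)$ with $1<m<k$ and $m$ odd, choose representatives $\mbf u$ and $\mbf w$ in $\Omega_{2k}$ with $\wt(\mbf u)=\wt(\mbf w)=m$. These representatives are well-defined since each twin pair of weight $(m,2k-m)$ contains exactly one vertex of weight $m$ (as $m\le k$, and in fact $m<k$ here so $m\ne 2k-m$), and they are distinct vertices of $\Omega_{2k}$ because $[\mbf u]\ne[\mbf w]$. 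Apply Lemma~\ref{lem:mLessk} to obtain $\mbf y\in V(\Omega_{2k})$ with $\wt(\mbf y)=k-1$, adjacent to $\mbf u$ but not to $\mbf w$.

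Next I would verify that $[\mbf y]$ does the job in $\wtO$. Its weight is $(k-1, k+1)$ since $\wt(\mbf y)=k-1$ and $\wt(\mbf y+\mbf 1)=2k-(k-1)=k+1$, and $k-1\le k$ (with equality impossible here as $k\ge 2$), so this is the correctly normalized weight label. Since $\mbf y\in N(\mbf u)$ in $\Omega_{2k}$, the definition of the quotient graph immediately gives $[\mbf y]\in N([\mbf u])$ in $\wtO$. For the nonadjacency, suppose for contradiction that $[\mbf y]\in N([\mbf w])$ in $\wtO$. By the definition of quotient adjacency there would exist $\mbf y'\in\{\mbf y, \mbf y+\mbf 1\}$ and $\mbf w'\in\{\mbf w, \mbf w+\mbf 1\}$ with $\mbf y'\mbf w'\in E(\Omega_{2k})$; but then~(\ref{equiv:twins}) forces $\mbf y$ and $\mbf w$ to be adjacent in $\Omega_{2k}$, contradicting the conclusion of Lemma~\ref{lem:mLessk}. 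Hence $[\mbf y]\notin N([\mbf w])$, and $[\mbf y]$ is the desired vertex.

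I do not anticipate a genuine obstacle here; the only point that needs a moment's care is the well-definedness of the weight-$m$ representatives and the observation that the naive quotient-graph adjacency definition is actually an "if and only if" on representatives thanks to~(\ref{equiv:twins}) — this is precisely the "stronger interpretation" remarked on just after~(\ref{equiv:twins}), and it is what lets nonadjacency in $\Omega_{2k}$ lift to nonadjacency in $\wtO$. In the writeup I would state these two small verifications explicitly and otherwise let Lemma~\ref{lem:mLessk} carry the weight.

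\begin{proof}
Since $1<m<k$, the value $m$ satisfies $m\le k$ and $m\ne 2k-m$, so within each twin pair of weight $(m,2k-m)$ exactly one vertex has weight $m$. Let $\mbf u$ and $\mbf w$ be the weight-$m$ representatives of $[\mbf u]$ and $[\mbf w]$ respectively; these are distinct vertices of $\Omega_{2k}$ since $[\mbf u]\ne[\mbf w]$, and $\wt(\mbf u)=\wt(\mbf w)=m$. By Lemma~\ref{lem:mLessk}, there is a vertex $\mbf y\in V(\Omega_{2k})$ with $\wt(\mbf y)=k-1$ that is adjacent to $\mbf u$ but not to $\mbf w$ in $\Omega_{2k}$.

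Then $\wt(\mbf y+\mbf 1)=2k-(k-1)=k+1$, so $[\mbf y]$ has weight $(k-1,k+1)$. Since $\mbf y\in N(\mbf u)$ in $\Omega_{2k}$, the definition of the quotient graph gives $[\mbf y]\in N([\mbf u])$ in $\wtO$. Suppose, for contradiction, that $[\mbf y]\in N([\mbf w])$ in $\wtO$. By the definition of adjacency in a quotient graph, there exist $\mbf y'\in\{\mbf y,\mbf y+\mbf 1\}$ and $\mbf w'\in\{\mbf w,\mbf w+\mbf 1\}$ with $\mbf y'$ adjacent to $\mbf w'$ in $\Omega_{2k}$. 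By the equivalences in~(\ref{equiv:twins}), this forces $\mbf y$ to be adjacent to $\mbf w$ in $\Omega_{2k}$, contradicting Lemma~\ref{lem:mLessk}. Hence $[\mbf y]$ is not adjacent to $[\mbf w]$ in $\wtO$, and $[\mbf y]$ is a vertex of weight $(k-1,k+1)$ adjacent to $[\mbf u]$ but not to $[\mbf w]$.
\end{proof}
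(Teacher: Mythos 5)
Your proof is correct and is exactly the argument the paper intends: the paper gives no explicit proof beyond the remark ``By passing to the quotient graph, we have the following,'' and you have simply filled in the details (well-definedness of the weight-$m$ representatives, and the use of the equivalences in~(\ref{equiv:twins}) to lift both adjacency and nonadjacency from $\Omega_{2k}$ to $\wtO$) correctly.
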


Combining Corollaries~\ref{cor:DiffWeights} and \ref{cor:SameWeight} achieves our goal.

\begin{proposition}\label{prop:UniqueNeighbors} \rm 
Each odd vertex in $\wtO$ has a unique set of neighbors among the set of vertices of weight $(k-1, k+1)$. 
\end{proposition}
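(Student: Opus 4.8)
The plan is to derive Proposition~\ref{prop:UniqueNeighbors} directly from the two corollaries that immediately precede it, together with the weight bookkeeping already carried out in this section. Let $[\mbf u]$ and $[\mbf w]$ be distinct odd vertices of $\wtO$; I must produce a vertex of weight $(k-1,k+1)$ adjacent to exactly one of them. Write $\wt([\mbf u]) = (m, 2k-m)$ and $\wt([\mbf w]) = (n, 2k-n)$ with $m,n$ odd and $1 \le m,n \le k$.

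First I would dispose of the case $m \neq n$. By Corollary~\ref{cor:DiffWeights}, vertices of weight $(m,2k-m)$ and vertices of weight $(n,2k-n)$ have different numbers of neighbors of weight $(k-1,k+1)$, so $[\mbf u]$ and $[\mbf w]$ cannot possibly have the same neighbor set among weight-$(k-1,k+1)$ vertices. Next, suppose $m = n$. If $1 < m < k$, Corollary~\ref{cor:SameWeight} immediately gives a vertex $[\mbf y]$ of weight $(k-1,k+1)$ adjacent to $[\mbf u]$ but not to $[\mbf w]$, so again their neighbor sets differ. The only odd values of $m$ with $1 \le m \le k$ not covered by these two corollaries are $m = 1$ and $m = k$ (the latter only when $k$ is odd). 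So the residual work is to handle same-weight pairs with $m \in \{1, k\}$.

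For $m = 1$: there is a unique odd vertex of weight $(1, 2k-1)$ in $\wtO$ up to the structure of the problem? Not quite — there are $2k$ bitstrings of weight $1$, giving $2k$ distinct classes $[\mbf e_i]$. I would show these are pairwise distinguished by their weight-$(k-1,k+1)$ neighborhoods: passing to $\Omega_{2k}$, given distinct $\mbf e_i, \mbf e_j$ of weight $1$, I need $\mbf y$ of weight $k-1$ adjacent to $\mbf e_i$ but not $\mbf e_j$, i.e. $\mbf v$ of weight $k$ with $|\supp(\mbf e_i) \triangle \supp(\mbf v)| = k-1$ (forcing $i \in \supp(\mbf v)$) and $|\supp(\mbf e_j)\triangle\supp(\mbf v)| \neq k$ (i.e. $j \notin \supp(\mbf v)$); such a $\mbf v$ exists whenever $k \ge 1$ and $2k \ge 2$, which is automatic. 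For $m = k$ (so $k$ odd): here the relevant neighbor weight $k-1$ satisfies $t = \frac{m+1}{2} = \frac{k+1}{2}$; given distinct weight-$k$ vertices $\mbf u, \mbf w$ in $\Omega_{2k}$, I would run essentially the same support-counting construction as in Lemma~\ref{lem:mLessk} to find $\mbf v$ of weight $k$ with $\mbf u + \mbf v$ of weight $k-1$ but $\mbf w + \mbf v$ not of weight $k$; the argument is identical in spirit, one just checks that the Venn-diagram region sizes remain nonnegative when $m = k$ (the slack $2k - (4a+2b+1)$ versus $2[k-(2a+b+1)]$ in the odd-$r$ case, and the analogous inequality in the even-$r$ case, only use $m \le k$, so nothing breaks). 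Since $\mbf u \ne \mbf w$ and $\mbf u, \mbf w$ are not twins (they have equal weight $k < 2k$), the disjoint/overlap case split of Lemma~\ref{lem:mLessk} applies verbatim.

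The main obstacle is making sure the $m \in \{1, k\}$ boundary cases really do go through, since the section's remark explicitly excludes them from Lemma~\ref{lem:mLessk} and Corollary~\ref{cor:SameWeight} (which require $1 < m < k$). My expectation is that these are genuinely easier, not harder: for $m=1$ the construction is nearly trivial, and for $m = k$ the Lemma~\ref{lem:mLessk} computation only ever used $m \le k$ in its inequalities (it used $m < k$ solely to guarantee $m+1 \le k$ in Case~1, which still holds when $m = k$ provided one rereads it as $m+1 = k+1$ — here I would instead note that in Case~1 with $m = k$ one picks fewer than $\frac{m+1}{2}$ positions from each support, or simply observe $\supp(\mbf u)\cap\supp(\mbf w)=\emptyset$ with $\wt(\mbf u)=\wt(\mbf w)=k$ forces $2k \le 2k$, equality, a degenerate-but-handleable configuration). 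A cleaner route, if it turns out to compile faster, is to simply state that the $m=1$ and $m=k$ cases follow by the same argument as Lemma~\ref{lem:mLessk} with the region sizes rechecked, and that $m \ne n$ with one of them in $\{1,k\}$ is still covered by Corollary~\ref{cor:DiffWeights}, which has no such restriction.
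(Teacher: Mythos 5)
Your overall strategy coincides with the paper's: the paper's entire proof of Proposition~\ref{prop:UniqueNeighbors} is the single sentence that Corollaries~\ref{cor:DiffWeights} and \ref{cor:SameWeight} combine to give the result. You go further by observing that Corollary~\ref{cor:SameWeight} as stated covers only $1 < m < k$, so the same-weight boundary cases $m=1$ and $m=k$ (the latter for $k$ odd) need separate treatment; the paper is silent on these, so your extra care is warranted rather than redundant. Your decomposition --- distinct weights via Corollary~\ref{cor:DiffWeights}, equal weights $1<m<k$ via Corollary~\ref{cor:SameWeight}, then the two boundary weights by hand --- is the right completion of the argument.

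Two of your boundary computations need repair. For $m=1$: writing $\mbf y = \mbf x_i + \mbf v$ with $\wt(\mbf v)=k$ and $i \in \supp(\mbf v)$, we get $\supp(\mbf y) = \supp(\mbf v)\setminus\{i\}$, and $\wt(\mbf y + \mbf x_j) = (k-1)+1 = k$ exactly when $j \notin \supp(\mbf y)$; so your stipulation $j \notin \supp(\mbf v)$ makes $\mbf y$ \emph{adjacent} to $\mbf x_j$, the opposite of what you want. The correct requirement is $j \in \supp(\mbf v)$; equivalently, take the weight-$(k+1)$ representative $\mbf y'$, note that $\mbf y'$ is adjacent to $\mbf x_i$ if and only if $i \in \supp(\mbf y')$, and choose $\supp(\mbf y')$ to contain $i$ but not $j$, which is possible since $k+1 \leq 2k-1$ for $k \geq 2$. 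For $m=k$: your claim that $\mbf u$ and $\mbf w$ are not twins ``because they have equal weight $k < 2k$'' is false --- a weight-$k$ vertex and its twin $\mbf u + \one$ both have weight $k$. What saves the case is that you are working with distinct classes $[\mbf u] \neq [\mbf w]$ in $\wtO$, so you may choose representatives with $\mbf w \notin \{\mbf u, \mbf u + \one\}$; then disjoint supports would force $\supp(\mbf w)$ to be the complement of $\supp(\mbf u)$, i.e.\ $\mbf w = \mbf u + \one$, so Case~1 of Lemma~\ref{lem:mLessk} is vacuous for $m=k$ and only Case~2 needs checking (there the complement of $\supp(\mbf u) \cup \supp(\mbf w)$ has exactly $r$ elements, which one verifies is enough for the required leftover positions). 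With these two points corrected, your argument is sound and is in fact a more complete version of what the paper asserts.
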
 

Now we can explicitly build a determining set for $\Omega_{2k}$ and $\wtO$.  For $i \in \{1,  \dots , 2k\}$, let $\mbf x_i$ denote the vertex of $\Omega_{2k}$ represented as a bitstring with a 1 in position $i$ and 0's elsewhere, with $[ \mbf x_i]$ being the corresponding vertex of $\widetilde\Omega_{2k}$. 

\begin{proposition}\label{prop:DetD} \rm 
Let  $D =  \{[\mbf x_1], [\mbf x_2], \dots, [\mbf x_{2k-1}]\} $, which is a subset of the odd vertices of $\wtO$.
If $k$ is even, then $D$ is a determining set for the odd component of $\wtO$.
If $k$ is odd,  then $D$ is a determining set for $\wtO$.
\end{proposition}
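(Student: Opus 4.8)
The plan is to show directly that the pointwise stabilizer of $D$ is trivial --- in $\Aut(\wtO)$ when $k$ is odd, and in the automorphism group of the odd component of $\wtO$ when $k$ is even. So let $\alpha$ be an automorphism of the relevant graph with $\alpha([\mbf x_i]) = [\mbf x_i]$ for every $i \in \{1, \dots, 2k-1\}$; the goal is to conclude that $\alpha$ is the identity.

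The first step is to record how the vertices $[\mbf x_i]$ meet the rest of $\wtO$. A short support computation --- of exactly the kind carried out just before Lemma~\ref{lem:Binom} --- shows that for every vertex $[\mbf z]$ and every $i \in \{1, \dots, 2k\}$, the vertex $[\mbf x_i]$ is adjacent to $[\mbf z]$ if and only if $[\mbf z]$ has weight $(k-1, k+1)$ and $i \notin \supp(\mbf z)$, where $\mbf z$ is taken to be the representative of weight $k-1$. I would extract two consequences. First, the set $W$ of vertices of weight $(k-1,k+1)$ is exactly the set of vertices having a neighbor in $D$; here one uses that $|\supp(\mbf z)| = k-1 < 2k-1$, so $\supp(\mbf z)$ cannot contain all of $\{1, \dots, 2k-1\}$. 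Second, on $W$ the assignment $[\mbf z] \mapsto \supp(\mbf z) \cap \{1, \dots, 2k-1\}$ is injective, since $|\supp(\mbf z)| = k-1$ is fixed and so this intersection determines whether $2k \in \supp(\mbf z)$ and hence determines $\supp(\mbf z)$ itself --- this is precisely why omitting $[\mbf x_{2k}]$ from $D$ costs nothing. Since $\alpha$ fixes $D$ pointwise, $N(\alpha([\mbf y])) \cap D = N([\mbf y]) \cap D$ for every vertex $[\mbf y]$; in particular $\alpha(W) = W$, and applying the injectivity just noted to each $[\mbf z] \in W$ gives $\alpha([\mbf z]) = [\mbf z]$. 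So $\alpha$ fixes every vertex of $W$.

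The second step pushes this out to all odd vertices via Proposition~\ref{prop:UniqueNeighbors}. Note first that $\alpha$ preserves the parity of vertices: when $k$ is even this is automatic, because the odd component contains only odd vertices (and, since $k-1$ is odd, both $D$ and $W$ lie inside it); when $k$ is odd it holds because $\wtO$ is connected and bipartite with parts the even and the odd vertices, and $\alpha$ fixes the odd vertex $[\mbf x_1]$, so it cannot interchange the two parts. Now for any odd vertex $[\mbf u]$, the image $\alpha([\mbf u])$ is odd and, since $\alpha$ fixes $W$ pointwise, has the same set of neighbors in $W$ as $[\mbf u]$; Proposition~\ref{prop:UniqueNeighbors} then forces $\alpha([\mbf u]) = [\mbf u]$. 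Thus $\alpha$ fixes every odd vertex. If $k$ is even, the odd component is exactly the subgraph on the odd vertices, so $\alpha$ is the identity there and we are finished. If $k$ is odd, only the even vertices remain: each even vertex $[\mbf v]$ has nonempty neighborhood consisting entirely of odd vertices, all of which $\alpha$ now fixes, so $N(\alpha([\mbf v])) = N([\mbf v])$; since $\wtO$ is twin-free by Lemma~\ref{lem:TildeTwinFree}, this gives $\alpha([\mbf v]) = [\mbf v]$, and $\alpha$ is the identity on $\wtO$.

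The only real content beyond routine bookkeeping is the adjacency computation relating $[\mbf x_i]$ to the weight-$(k-1,k+1)$ vertices, together with the observation that dropping $[\mbf x_{2k}]$ is harmless; everything else is an application of Proposition~\ref{prop:UniqueNeighbors} and Lemma~\ref{lem:TildeTwinFree}. I expect the main obstacle to be nothing deep but rather keeping the parity-and-component split clean across the two cases, and separately checking the small values of $k$ (roughly $k \le 3$ or $k \le 4$, as flagged in the remark preceding Lemma~\ref{lem:Binom}), where Proposition~\ref{prop:UniqueNeighbors} is essentially vacuous and the claim should be verified by hand --- for example, when $k = 2$ the odd component is $K_4$ and $D$ consists of three of its four vertices.
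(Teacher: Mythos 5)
Your proof is correct and follows essentially the same route as the paper's: fix the weight-$(k-1,k+1)$ vertices via their adjacency pattern with $D$ (noting that omitting $[\mbf x_{2k}]$ is harmless), then invoke Proposition~\ref{prop:UniqueNeighbors} to fix all odd vertices, and finish with twin-freeness (Lemma~\ref{lem:TildeTwinFree}) for the even vertices when $k$ is odd. The only cosmetic difference is your packaging of the first step as injectivity of $[\mbf z]\mapsto \supp(\mbf z)\cap\{1,\dots,2k-1\}$ on $W$, which is the same observation the paper makes when it distinguishes the cases $2k\in\supp(\mbf y)$ and $2k\notin\supp(\mbf y)$.
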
 

\begin{proof}
Assume $\tilde \alpha \in \Aut(\wtO)$ fixes pointwise the vertices in $D$.  Any graph automorphism of $\wtO$ must respect  its separation into two components if $k$ is even, or its bipartition if $k$ is odd. Thus, since $\tilde \alpha$ fixes $D$, $\tilde \alpha$ must map odd vertices to odd vertices and even vertices to even vertices.

One can easily verify that every neighbor of a vertex in $D$ has weight $(k-1, k+1)$. Conversely, let $\mbf y \in V(\Omega_{2k})$ be a vertex of weight $k+1$. Then $\mbf y$ is adjacent to $\mbf x_i$ if and only if $i \in \supp(\mbf y)$; equivalently $\mbf y$ can be uniquely identified either by which $k+1$ of the $\mbf x_i$ it is adjacent to, or by which $k-1$ of the $\mbf x_i$ it is not adjacent to. In the quotient graph,
\[
\{ [\mbf y]\} = \bigcap \{N([\mbf x_i]) \mid i \in \supp(\mbf y)\}.
\]
If $2k \notin \supp(\mbf y)$, then $[\mbf y]$ is the unique common neighbor of $k+1$ elements of $D$.
If $2k \in \supp(\mbf y)$, then $[\mbf y]$ can still be identified by which $k$ elements of $D$ it is adjacent to and which $k-1$ elements it is not adjacent to.

Thus fixing $D$  fixes all vertices in $\wtO$ of weight $(k-1, k+1)$. Then by Proposition~\ref{prop:UniqueNeighbors}, $\tilde \alpha$ must fix every odd vertex of $\wtO$. If $k$ is even, then we are done.

If $k$ is odd, then $\wtO$ is bipartite with each even vertex having only odd neighbors.
By Lemma~\ref{lem:TildeTwinFree}, since $\wtO$ is twin-free, no two nonadjacent (i.e., even) vertices of $\wtO$ have the same neighborhood. Hence $\tilde \alpha$ also fixes all even vertices and we are done.\end{proof}

Although the preceding proposition does not assert that $D$ is a minimum size determining set,  it is a minimal determining set. Without loss of generality let  $D' =  \{[\mbf x_1], [\mbf x_2], \dots, [\mbf x_{2k-2}]\} $. 
Let $\sigma \in S_{2k} $ be the transposition permutation that interchanges $2k-1$ and $2k$. Then the corresponding nontrivial permutation automorphism on $\Omega_{2k}$ fixes $\mbf x_1, \dots \mbf x_{2k-2}$ and so the induced nontrivial automorphism  on $\wtO$ fixes the elements of $D'$. 

\begin{corollary}\label{cor:DetQuotient} ${\rm Det}(\widetilde{\Omega}_{2k}) \leq 2k-1$. \end{corollary}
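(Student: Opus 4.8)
\textbf{Proof proposal for Corollary~\ref{cor:DetQuotient}.}

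The plan is to derive the bound $\Det(\widetilde{\Omega}_{2k}) \leq 2k-1$ directly from Proposition~\ref{prop:DetD}, splitting into the two cases already established there. In the case $k$ odd, Proposition~\ref{prop:DetD} already states that $D = \{[\mbf x_1], \dots, [\mbf x_{2k-1}]\}$ is a determining set for all of $\wtO$, and $|D| = 2k-1$, so the inequality is immediate. The only real content is the case $k$ even, where Proposition~\ref{prop:DetD} only gives that $D$ determines the \emph{odd component} of $\wtO$; I need to promote this to a statement about the whole graph.

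For the even case, recall from Section~\ref{sec:structure} that when $k$ is even, $\wtO$ consists of two isomorphic connected components, the even component and the odd component. The idea is that a determining set for $\wtO$ need only ``pin down'' each component and prevent any automorphism from swapping the two components. First I would observe that since the odd component is determined by $D$ (of size $2k-1$), and the two components are isomorphic, the even component is likewise determined by the image of $D$ under any fixed isomorphism between them --- in fact by an analogous set, say $D^* = \{[\mbf x_i + \mbf x_{2k}] : 1 \le i \le 2k-1, \ i \ne \text{(one index)}\}$ or more simply by translating $D$ by a single odd bitstring. But this would give a determining set of size roughly $2(2k-1)$, which is too big. Instead, the sharper observation is this: any automorphism $\tilde\alpha \in \Aut(\wtO)$ that fixes $D$ pointwise must, since $D$ lies in the odd component, fix that component setwise, hence (the components being the two connected pieces) fix the even component setwise as well; and since $\tilde\alpha$ fixes the odd component pointwise by Proposition~\ref{prop:DetD}, it remains only to show $\tilde\alpha$ fixes the even component pointwise too. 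Here I would invoke that $\wtO$ is twin-free (Lemma~\ref{lem:TildeTwinFree}): each even vertex $[\mbf z]$ has a neighborhood $N([\mbf z])$ lying entirely in the odd component, which is now fixed pointwise, so $\tilde\alpha([\mbf z])$ has the same neighborhood as $[\mbf z]$; twin-freeness then forces $\tilde\alpha([\mbf z]) = [\mbf z]$.

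The main subtlety --- and the step I expect to need the most care --- is the claim that every even vertex of $\wtO$ actually has a neighbor, i.e., that the even component is not a single isolated vertex, so that the twin-free argument has something to work with; this is clear since $\wtO$ is $\frac12\binom{2k}{k}$-regular and that degree is positive for $k \ge 1$. A second point to handle cleanly is that an automorphism fixing points of the odd component cannot move the even component \emph{to} the odd component: this follows because the components are the connected components, an automorphism permutes connected components, and one component contains fixed points so is mapped to itself, forcing the other component to itself as well. Once these are in place, the proof is a short assembly: $D$ is a determining set for $\wtO$ in both parities, $|D| = 2k-1$, hence $\Det(\wtO) \le 2k-1$. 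I would also briefly note that the preceding remark (with $D'$ and the transposition of $2k-1,2k$) shows $D$ is a \emph{minimal} determining set, though the corollary itself only needs the upper bound.
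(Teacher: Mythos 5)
Your $k$ odd case is fine and is exactly how the paper obtains the corollary from Proposition~\ref{prop:DetD}: $D$ determines all of $\wtO$ and $|D|=2k-1$. The $k$ even case, however, contains a genuine error. You claim that each even vertex $[\mbf z]$ has its entire neighborhood in the odd component, and then use twin-freeness to propagate the pointwise fixing of the odd component to the even vertices. But that description of the neighborhoods is the $k$ \emph{odd} situation: when $k$ is odd, $\wtO$ is bipartite between even and odd vertices, and this is precisely the argument the paper uses inside the proof of Proposition~\ref{prop:DetD} to finish that case. When $k$ is \emph{even}, adjacency preserves the parity of the weight, so the even vertices form their own connected component and every neighbor of an even vertex is again even. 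Fixing the odd component pointwise therefore gives no information about the even component, and your argument collapses exactly at the step you flagged as delicate; the worry you actually checked (that even vertices have neighbors) is not the relevant one --- the issue is where those neighbors live.

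Indeed, for $k$ even the set $D$ is \emph{not} a determining set for $\wtO$: any nontrivial automorphism of the even component, extended by the identity on the odd component, fixes $D$ pointwise. Concretely, $\widetilde\Omega_4$ is two disjoint copies of $K_4$, so $\Det(\widetilde\Omega_4)=6>3=2k-1$, and the corollary as literally stated cannot be proved for $k$ even. The paper never uses it in that form: in Theorem~\ref{thm:DistOmega} the even component is handled by spending one additional label on an isomorphic copy, not by enlarging the determining set. The defensible version of the corollary is that $\Det(\wtO)\le 2k-1$ when $k$ is odd, and that the determining number of the odd component of $\wtO$ is at most $2k-1$ when $k$ is even; both follow immediately from Proposition~\ref{prop:DetD}, with no further argument needed.
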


\section{Distinguishing $\Omega_{2k}$}\label{sec:upperbd}

\begin{theorem}\label{thm:DistOmega} \rm 
 $2 < $ Dist$(\Omega_{2k}) \leq m$, where $m$ is the smallest integer that satisfies
\[
\binom{m}{2} \geq 
\begin{cases}
2k, \quad & k \text{ odd},\\
2k+1, & k \text{ even}.
\end{cases}
\]
\end{theorem}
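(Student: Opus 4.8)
The lower bound $2<\Dist(\Omega_{2k})$ is exactly Theorem~\ref{thm:Dist2}, so the work is the upper bound. The plan is to distinguish $\Omega_{2k}$ by first distinguishing the quotient $\wtO$ and then converting a distinguishing labeling of $\wtO$ into one of $\Omega_{2k}$ using \emph{label-pairs}: with $m$ labels there are $\binom m2$ unordered pairs of distinct labels, and a labeling $f\colon V(\Omega_{2k})\to\{1,\dots,m\}$ in which every twin pair receives two distinct labels is the same data as an assignment $g([\mbf u])=\{f(\mbf u),f(\mbf u+\one)\}$ of such a pair to each vertex of $\wtO$, together with an orientation inside each twin pair.

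The first step is a conversion lemma: if $\wtO$ has a distinguishing $t$-labeling and $\binom m2\ge t$, then $\Omega_{2k}$ has a distinguishing $m$-labeling. Indeed, fix an injection from the $t$ colors into the $\binom m2$ label-pairs and define $f$ so that the induced $g$ is the given labeling of $\wtO$ composed with this injection (the orientation chosen inside each twin pair will be irrelevant). If $\alpha\in\Aut(\Omega_{2k})$ preserves the $f$-classes, then by Lemma~\ref{lem:AutoPolar} the induced automorphism $\phi(\alpha)$ of $\wtO$ preserves the $g$-classes, hence the color-classes of the given distinguishing labeling of $\wtO$, and so $\phi(\alpha)$ is the identity. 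Then $\alpha\in\ker\phi$, so $\alpha=\pi_U$ for some $U\subseteq V(\wtO)$; but since every twin pair carries two distinct labels, $\pi_U$ preserves the $f$-classes only when $U=\emptyset$. Thus $\alpha$ is trivial and $f$ is distinguishing.

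The second step bounds $\Dist(\wtO)$. When $k$ is odd, $\wtO$ is connected and by Proposition~\ref{prop:DetD} and Corollary~\ref{cor:DetQuotient} has a determining set of size $2k-1$, so Corollary~\ref{cor:distdet} gives $\Dist(\wtO)\le 2k$. When $k$ is even, $\wtO$ is the disjoint union of two isomorphic components (an isomorphism between them is induced by $\tau_{\mbf v}$ for any odd-weight $\mbf v$), and Proposition~\ref{prop:DetD} supplies the odd component with a determining set of size $2k-1$; hence each component has distinguishing number at most $2k$. Giving each component a distinguishing labeling with $2k$ colors and then recoloring one vertex of one component with a fresh $(2k{+}1)$st color keeps that component's labeling distinguishing, rules out any automorphism of $\wtO$ that swaps the two components, and therefore shows $\Dist(\wtO)\le 2k+1$.

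Putting the steps together: if $k$ is odd and $\binom m2\ge 2k$, or $k$ is even and $\binom m2\ge 2k+1$, then $\binom m2\ge\Dist(\wtO)$, so $\Omega_{2k}$ has a distinguishing $m$-labeling; taking $m$ smallest with that property yields $\Dist(\Omega_{2k})\le m$. I expect the one delicate point to be the even-$k$ case, where the automorphism of $\wtO$ interchanging its two isomorphic components must be destroyed, and it is precisely the single extra color needed there that forces $2k+1$ rather than $2k$; everything else is the formal passage between $\Omega_{2k}$, its twin structure, and $\wtO$ developed in Sections~\ref{sec:structure} and~\ref{sec:DetQuotient}, together with the elementary fact that reassigning one vertex of a distinguishing labeling to a brand-new color leaves it distinguishing.
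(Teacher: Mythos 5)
Your proposal is correct and follows essentially the same route as the paper: bound $\Dist(\wtO)$ by $2k$ (odd $k$) or $2k+1$ (even $k$) via the determining set $D$ of Proposition~\ref{prop:DetD}, then lift to $\Omega_{2k}$ by encoding colors of $\wtO$ as pairs of distinct labels split across each twin pair, with Lemma~\ref{lem:AutoPolar} forcing any label-preserving automorphism to induce the identity on $\wtO$ and the distinctness within twin pairs killing the kernel of $\phi$. Your explicit ``recolor one vertex with a fresh color'' device for the even-$k$ component swap is just a slightly more detailed version of the paper's ``one more label'' step.
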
  

\begin{proof}
First assume $k$ is odd. By Proposition~\ref{prop:DetD}, $D$ is a determining set for $\wtO$. 
The subgraph of $\wtO$ induced by $D$ is a null graph and so has distinguishing number $|D| = 2k-1$.
Thus by Theorem~\ref{thm:distdet}, $\wtO$ can be $2k$-distinguished. 

Next assume $k$ is even. By Proposition~\ref{prop:DetD}, $D$ is a determining set for the odd component of $\wtO$. 
If $k=2$, then the subgraph of $\wtO$ induced by $D$ is a complete graph, and otherwise it is a null graph. In all cases, it has distinguishing number $2k-1$.
Thus by Theorem~\ref{thm:distdet}, the odd component of $\wtO$ can be $2k$-distinguished.  Since the even component is an isomorphic copy of the odd component,  we need only one more label to distinguish the even component and to distinguish it from the odd component.  Thus $\wtO$ can be $(2k+1)$-distinguished.

Suppose there exists an $\ell$-distinguishing labeling $\tilde f$  of $\wtO$. To extend it to a distinguishing labeling on $\Omega_{2k}$, recall that by Lemma~\ref{lem:DisDetTwins}, twin vertices in $\Omega_{2k}$ must be assigned different labels in any distinguishing labeling.  

If $m$ satisfies 
\[
\binom{m}{2} \geq \ell,
\]
then we can create $\ell$ different label-pairs from $m$ different labels. We assign these label-pairs to vertices in  $\wtO$ according to $\tilde f$, then randomly assign one label from each label-pair to the  members of the corresponding twin pair in $\Omega_{2k}$. 

The following argument shows that this creates an $m$-distinguishing labeling of $\Omega _{2k}$.  Suppose $\alpha \in \Aut(\Omega_{2k})$ satisfies 
$f(\mbf u) = f(\alpha (\mbf u))$ for all $\mbf u \in V(\Omega_{2k})$. Then by Lemma~\ref{lem:AutoPolar},
\[
f(\mbf u + \mbf 1) = f(\alpha(\mbf u + \mbf 1)) = f(\alpha(\mbf u) + \mbf 1),
\]
and so
\begin{align*}
\tilde f([\mbf u]) & =\{f(\mbf u), f(\mbf u + \mbf 1)\} \\ &= \{ f(\alpha (\mbf u)),  f(\alpha(\mbf u) + \mbf 1)\} \\& = \tilde f ([\alpha(\mbf u)]) = \tilde f(\tilde \alpha ([\mbf u]).
\end{align*}
By the assumption that $\tilde f$ is distinguishing, $\tilde \alpha$ is the identity on $\widetilde \Omega_{2k}$, which means that either $\alpha(\mbf u) = \mbf u$ or $\alpha(\mbf u) = \mbf u + \mbf 1$.  Since twin vertices have different labels under $f$ and $\alpha $ respects $f$, $\alpha$ must be the identity on $\Omega_{2k}$.
\end{proof}

The table below shows  minimum values of the upper bound $m$ for $2 \leq k \leq 18$.
\begin{table}[htp]
\begin{center}
\begin{tabular}{|c|c|c|c|c|c|c|c|c|c|c|c|c|c|c|c|c|c|} \hline
$k$ & 2 & 3& 4& 5& 6& 7 & 8 & 9 & 10 & 11 & 12 & 13 &14 & 15 & 16 & 17 & 18\\
\hline
$m$ & 4 & 4 & 5 & 5 & 6 & 6 & 7 & 7 & 7 & 8 & 8 & 8  &9 &9 &9 &9 &9\\
\hline
\end{tabular}
\end{center}
\label{default}
\end{table}
 
\section{Open Questions}\label{sect:questions}

\begin{question}\rm Is ${\rm Det}(\Omega_{2k})=2k-1$ or can it be smaller? \end{question}

\begin{question} \rm Let $k\geq 2$; let $m$ be the smallest integer so that 
$
\binom{m}{2}~\geq~
\begin{cases}
2k, \quad & k \text{ odd},\\
2k+1, & k \text{ even}.
\end{cases}
$.  For which  $k\geq 2$ does  ${\rm Dist}(\Omega_{2k}) = m$?\end{question}

\section{Acknowledgments}

The authors wish to thank two anonymous referees for their careful reading and thoughtful comments on the manuscript that led to this article.

\begin{appendix}

\section{Proof of Lemma \ref{lem:Binom}}

\1{\bf Lemma \ref{lem:Binom}} 
For distinct odd $m, n$, both less than or equal to $k$,  
\[
\binom{m}{\frac{m+1}{2}} \binom{2k-m}{k-\frac{m+1}{2}} \neq  \binom{n}{\frac{n+1}{2}} \binom{2k-n}{k-\frac{n+1}{2}}.
\]

\begin{proof}
It suffices to show that the sequence 
\[
\binom{3}{2} \binom{2k-3}{k-2} , \binom{5}{3} \binom{2k-5}{k-3} , \dots, \binom{m}{\frac{m+1}{2}} \binom{2k-m}{k-\frac{m+1}{2}} 
\]
is monotone decreasing (where $m$ is largest odd number  satisfying $m \leq k$), and for this it suffices to show that for $n$ odd, $1 < n \leq k$,
\[
\binom{n-2}{\frac{n-1}{2}} \binom{2k-n+2}{k-\frac{n-1}{2}} >  \binom{n}{\frac{n+1}{2}} \binom{2k-n}{k-\frac{n+1}{2}}.
\]
We use some combinatorial algebra to rewrite the binomial coefficients:
\begin{align*}
\binom{n}{\frac{n+1}{2}} & = \frac{n!}{(\frac{n+1}{2})!(\frac{n-1}{2})!} \\
 & = \frac{n (n-1) (n-2)!}{(\frac{n+1}{2})( \frac{n-1}{2})!\, (\frac{n-1}{2}) (\frac{n-3}{2})!} \\
& = \frac{n(n-1)}{(\frac{n+1}{2})(\frac{n-1}{2})} \binom{n-2}{\frac{n-1}{2}} = \frac{4n}{n+1} \binom{n-2}{\frac{n-1}{2}}.
\end{align*}
Similar algebraic manipulations yield
\[
\binom{2k-n+2}{k-\frac{n-1}{2}}  = \frac{4(2k-n+2)}{2k-n+3} \binom{2k-n}{k-\frac{n+1}{2}}.
\]
Substituting in, we are trying to show that
\[
\binom{n-2}{\frac{n-1}{2}} \left [ \frac{4(2k-n+2)}{2k-n+3} \binom{2k-n}{k-\frac{n+1}{2}} \right ]
> \left [\frac{4n}{n+1} \binom{n-2}{\frac{n-1}{2}}\right ] \binom{2k-n}{k-\frac{n+1}{2}}.
\]
Canceling equal terms and cross-multiplying, this  holds if and only if
\[
(2k-n+2)(n+1) > n(2k-n+3),
\]
which simplifies to $k+1 > n$. Since we assumed $n \leq k$, we are done.
\end{proof}

\end{appendix}

\bibliographystyle{plain}
\bibliography{OrthogonalityGraphsJGT2}

\end{document}